\theoremstyle{plain}
\newtheorem{proposition}{Proposition}%[section]
\newtheorem{lemma}[proposition]{Lemma}
\newtheorem{theorem}[proposition]{Theorem}
\newtheorem{conjecture}[proposition]{Conjecture}
\newtheorem{question}[proposition]{Question}
\theoremstyle{definition}
\newtheorem*{ack}{Acknowledgement}
\theoremstyle{remark}
\newtheorem{remark}{Remark}
\def\N{\mathbb{N}}
\def\NP{\mathcal{N}}
\def\P{\mathcal{P}}
\def\G{\mathcal{G}}
\def\R{\mathcal{R}}
\def\E{\mathcal{E}}
\def\A{\mathcal{A}}
\def\B{\mathcal{B}}
\def\C{\mathcal{C}}
\def\D{\mathcal{D}}
\def\F{\mathcal{F}}
\def\Q{\mathcal{Q}}
\begin{document}

\title{Two variants of Wythoff's game preserving its $\P$-positions}

\author{Nhan Bao Ho}
\address{Department of Mathematics, La Trobe University, Melbourne, Australia 3086}
\email{nbho@students.latrobe.edu.au, honhanbao@yahoo.com}

\subjclass[2000]{ 91A46}
\keywords{Wythoff's game, $\P$-positions, Sprague-Grundy function, combinatorial games}

\begin{abstract}
We present two variants of Wythoff's game. The first game is a restriction of Wythoff's game in which removing tokens from the smaller pile is not allowed if the two entries are not equal. The second game is an extension of Wythoff's game obtained by adjoining a move allowing players to remove $k$ tokens from the smaller pile and $l$ tokens from the other pile provided $l < k$. We show that both games preserve the $\P$-positions of Wythoff's game. This resolves a question raised by Duch{\^e}ne,  Fraenkel, Nowakowski and Rigo. We give formulas for those positions which have Sprague-Grundy value 1. We also prove several results on the Sprague-Grundy functions.
\end{abstract}

%====================================================================================================================
%====================================================================================================================
%====================================================================================================================
\maketitle

\section{Introduction}
Wythoff's game, introduced by Willem Abraham Wythoff \cite{Wyt}, is a variant of Nim involving two piles of tokens. Two players move alternately. In each move, one can either remove an arbitrary number of tokens from one pile as in Nim or remove an arbitrary equal numbers of tokens from both piles. The game ends when the two piles become empty. The player who makes the last move wins. The position with the two piles of $a$ and $b$ tokens is denoted by $(a,b)$ which is also identical to $(b,a)$ because of symmetry.  A position is called a \emph{winning position} (known as \emph{$\NP$-position}) if the player about to move from there has a plan of moves to wins. Otherwise, it is a \emph{losing position} (known as \emph{$\P$-position}). Wythoff  showed that $(a,b)$ is a losing position if and only if $a = \lfloor \phi n \rfloor, b = \lfloor \phi^2 n \rfloor$ for some integer $n$, where $\phi = (1+\sqrt{5})/2$ and $\lfloor . \rfloor$ denotes the integer part. Aspects of Wythoff's game are discussed in \cite{blass, Dress, landman, nivasch}. Some variants involving more than two piles of tokens can be found at \cite{Ext-Res, Geo-ext, Nimhoff, End-Wyt, anew}.

Many natural variants of Wythoff's game involve either {\em restrictions}, where some moves of Wythoff's game are eliminated \cite{Gen-Connell, Nim-Wythoff, Ho}, or {\em extensions}, where certain additional moves are permitted \cite{Heapgame, Howtobeat, Gen-Fra, Adjoining, Ho, hog, Some-Gen}. Duch\^{e}ne \emph{et al.}~\cite{Ext-Res} examined restrictions and extensions under the added assumption that the moves in question be ``playable from any game position'' (such games are said to be ``invariant" in \cite{inv}); as an example of a move which is not of this type, they offered the following: remove an odd number of tokens from a position $(a,b)$ if $a$ or $b$ is a prime number, and an even number of tokens otherwise. With this definition of restriction, Duch\^{e}ne \emph{et al.}~\cite{Ext-Res} proved that there is no restriction of Wythoff's game preserving its $\P$-positions. Furthermore, they asked if there exists a variant of Wythoff's game preserving its $\P$-positions  which is not an extension in the sense of their paper \cite[Question 1]{Ext-Res}.  This paper presents two such variants; one is a restriction and one is an extension, in the general sense of these terms.

Let $S$ be a finite set of nonnegative integers. The smallest nonnegative integer not in $S$ is called the \emph{minimum excluded number} of $S$, denoted by $mex(S)$. For a given game, if there exists a move from $p$ to $q$, then $q$ is called a \emph{follower} of $p$. The \emph{Sprague-Grundy function} of a game $G$ is the function $\mathcal{G}$ from the set of positions of $G$ into the nonnegative integers defined inductively by
\[\mathcal{G}(p) = mex\{\mathcal{G}(q)| q \text{ is a follower of } p\}\]
with $mex\{\} = 0$. The value $\mathcal{G}(p)$ is called the \emph{Sprague-Grundy value} at $p$.

The outline of this paper is as follows. In the next section, we study a 2-pile variant of Wythoff's game that we call \emph{$\R$-Wythoff}. Each move is either to remove a positive number of tokens from the larger pile (or any pile if the two piles are the same size) or to remove the same number of tokens from both piles. Note that if the sizes of the two piles are not equal, then removing tokens from the smaller pile is not allowed. This is therefore a restriction of Wythoff's game. We show that $\R$-Wythoff preserves the $\P$-positions of Wythoff's game. Moreover, we prove that there is no restriction of $\R$-Wythoff preserving its $\P$-position. We describe those positions which have Sprague-Grundy value 1. We then investigate some properties of the Sprague-Grundy function, which is denoted by $\G_{\R}$.

In Section 3, we present an extension of Wythoff's game obtained by adjoining a move removing $k$ tokens from the smaller pile (or any pile if the two piles have the same size) and $l$ tokens from the other pile where $l < k$. We call this extension  \emph{$\E$-Wythoff}. We show that $\E$-Wythoff  also preserves $\P$-positions of Wythoff's game. We give formulas for those positions which have Sprague-Grundy value 1 in $\E$-Wythoff before proving several results for the Sprague-Grundy function, which is denoted by $\G_{\E}$.

This paper is a continuation of our work on 2-pile variants of Nim \cite{MEuclid, Min, CHL, Ho}. In particular, in \cite{Ho}, we examine several variants of Wythoff's game whose $\P$-positions are obtained by adding 1 to each entry of $\P$-positions of Wythoff's game.

%====================================================================================================================
%====================================================================================================================
%====================================================================================================================
\medskip

\section{$\R$-Wythoff}

Let $\phi = (1+\sqrt{5})/2$. Then $\phi^2 = \phi + 1$. Therefore, for every positive integer $n$, we have
\[\lfloor \phi^2n \rfloor = \lfloor \phi n + n \rfloor = \lfloor \phi n \rfloor + n.\]
The following lemma shows that the two sets $\{a_i | i \geq 1\}$ and $\{b_i | i \geq 1\}$, where $a_i = \lfloor \phi i \rfloor$ and $b_i = \lfloor \phi^2 i \rfloor$, are complementary. That is
\begin{align*}
\begin{cases}
\{a_i | i \geq 1\} \cup \{b_i | i \geq 1\} = \N,\\
\{a_i | i \geq 1\} \cap \{b_i | i \geq 1\} = \emptyset,
\end{cases}
\end{align*}
in which $\N$ is the set of positive integers.

%====================================================================================================================
\smallskip
\begin{lemma} \label{Comp} \cite{beatty1}
Let $a$ be a positive integer. There exists exactly one $n$ such that either $a = \lfloor \phi n \rfloor$ or $a = \lfloor \phi n \rfloor + n$. Moreover, the number $a$ cannot be of both forms.
\end{lemma}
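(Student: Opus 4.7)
The plan is to recognize this lemma as a direct application of Beatty's theorem, which is exactly the citation \cite{beatty1}. Beatty's theorem states that if $\alpha,\beta$ are positive irrationals with $1/\alpha + 1/\beta = 1$, then the sequences $\{\lfloor \alpha n \rfloor : n \geq 1\}$ and $\{\lfloor \beta n \rfloor : n \geq 1\}$ form a partition of the positive integers $\N$. My first step is to verify that $\alpha = \phi$ and $\beta = \phi^2$ satisfy the hypotheses: both are irrational and positive, and from $\phi^2 = \phi + 1$ we get
\[
\frac{1}{\phi} + \frac{1}{\phi^2} = \frac{\phi + 1}{\phi^2} = \frac{\phi^2}{\phi^2} = 1.
\]

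Once Beatty applies, every positive integer $a$ lies in exactly one of the two complementary Beatty sequences $\{a_n\} = \{\lfloor \phi n \rfloor\}$ and $\{b_n\} = \{\lfloor \phi^2 n \rfloor\}$, and within that sequence the index $n$ is unique (since both $\phi$ and $\phi^2$ exceed $1$, these sequences are strictly increasing). Finally, I would invoke the identity $\lfloor \phi^2 n \rfloor = \lfloor \phi n \rfloor + n$ recorded just above the lemma, which converts "$a = \lfloor \phi^2 n \rfloor$" into the form "$a = \lfloor \phi n \rfloor + n$" stated in the lemma.

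Since this is nothing more than the specialization of a classical result, there is really no obstacle to overcome; the only thing to be careful about is to cite Beatty correctly and to state the uniqueness of $n$ (which follows at once from the monotonicity of the two sequences). If the reader wants a self-contained proof rather than an appeal to Beatty, the standard approach is a density argument: for each $N$, show that $|\{n : \lfloor \phi n \rfloor \leq N\}| + |\{n : \lfloor \phi^2 n \rfloor \leq N\}| = N$, by noting that this count equals $\lfloor N/\phi \rfloor + \lfloor N/\phi^2 \rfloor$ and using $1/\phi + 1/\phi^2 = 1$ together with the irrationality of $\phi$ to eliminate the fractional parts. This inductive count forces both complementarity and disjointness simultaneously.
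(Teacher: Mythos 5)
Your proposal matches the paper exactly: the paper gives no proof of Lemma \ref{Comp}, simply citing Beatty's theorem \cite{beatty1}, and your verification that $1/\phi + 1/\phi^2 = 1$ together with the identity $\lfloor \phi^2 n \rfloor = \lfloor \phi n \rfloor + n$ is precisely the intended specialization. (One small correction to your optional self-contained sketch: $|\{n : \lfloor \phi n \rfloor \leq N\}| = \lfloor (N+1)/\phi \rfloor$ rather than $\lfloor N/\phi \rfloor$; the clean identity is $\lfloor N/\phi \rfloor + \lfloor N/\phi^2 \rfloor = N-1$, which counts the entries strictly less than $N$.)
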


We now show that a winning strategy in Wythoff's game can be applied to $\R$-Wythoff.
%====================================================================================================================
\smallskip
\begin{theorem} \label{BW-P}
The $\P$-positions of $\R$-Wythoff  are identical to those of Wythoff's game.
\end{theorem}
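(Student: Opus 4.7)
The plan is to verify the two standard properties that characterize the $\P$-positions of a game: every move from a purported $\P$-position leads outside the set, and from every position outside the set some move leads into it. Take $X=\{(a_n,b_n):n\ge 0\}$, the $\P$-position set of Wythoff. Because $\R$-Wythoff is a restriction of Wythoff (it forbids precisely the moves that strictly decrease the strictly smaller pile), every $\R$-Wythoff move is also a Wythoff move. Hence any $\R$-Wythoff move from a point of $X$ is a Wythoff move from a Wythoff $\P$-position, and by Wythoff's theorem lands outside $X$. The first property is therefore automatic.

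For the second property, I take a position $(a,b)\notin X$ with $a\le b$ and exhibit an $\R$-Wythoff move to some $(a_k,b_k)\in X$. The $\R$-Wythoff moves from $(a,b)$ are: reducing the $b$-pile by any positive amount; subtracting a common positive amount from both piles; and (only when $a=b$) reducing the $a$-pile. If $a=b>0$, remove $a$ tokens from both piles to reach $(0,0)$; if $a=0<b$, remove $b$ tokens from the $b$-pile. Otherwise $1\le a<b$, and by Lemma \ref{Comp} exactly one of $a=a_n$ or $a=b_m$ holds, for a unique index.

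If $a=a_n$ and $b>b_n$, remove $b-b_n$ tokens from the strictly larger $b$-pile to reach $(a_n,b_n)$. If $a=a_n$ and $a<b<b_n$, set $m=b-a$; since $b<b_n=a_n+n$, we have $m<n$, hence $a_m<a_n=a$, and subtracting $k=a-a_m\ge 1$ from both piles yields $(a_m,\,a_m+m)=(a_m,b_m)$. If $a=b_m$, then $a>a_m$, so $b>a>a_m$, and subtracting $b-a_m$ tokens from the strictly larger $b$-pile produces the unordered $\P$-pair $(a_m,b_m)$.

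The conceptual crux is the case $a=b_m$: Wythoff's standard winning response from $(b_m,b)$ would trim the smaller coordinate down to $a_m$, a move forbidden in $\R$-Wythoff. The symmetry of $\P$-positions as unordered pairs lets me replace this by a reduction of the larger pile to $a_m$, landing on the same pair. A similar substitution---equal removal from both piles replacing a single-pile reduction---handles $a=a_n<b<b_n$. These two replacements, together with the straightforward case $a=a_n$ with $b>b_n$, are the only nontrivial content; the appeal to Lemma \ref{Comp} to select the correct index is the other ingredient.
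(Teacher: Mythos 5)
Your proof is correct and follows essentially the same route as the paper's: property (i) is inherited because $\R$-Wythoff's moves are a subset of Wythoff's, and property (ii) is established by the same case split on whether the smaller entry is of the form $\lfloor\phi n\rfloor$ or $\lfloor\phi n\rfloor+n$, using the same three moves (larger-pile reduction, equal reduction, and larger-pile reduction to the swapped pair). The only differences are cosmetic: you treat $a=0$ separately and make the unordered-pair symmetry explicit.
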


\begin{proof}
Let $\A = \{(\lfloor\phi n\rfloor, \lfloor \phi n \rfloor + n) | n \geq 0\}$. We need to show that the following two properties hold for $\R$-Wythoff:
\begin{itemize}
\item [(i)]  Every move from a position in $\A$ cannot terminate in $\A$,
\item [(ii)] From every position not in $\A$, there is a move terminating in $\A$.
\end{itemize}

For (i), note that $\A$ is the set of $\P$-positions of Wythoff's game \cite{Wyt}. Moreover, a move in $\R$-Wythoff is also legal in Wythoff's game. Since (i) holds for Wythoff's game, (i) also holds for $\R$-Wythoff.

For (ii), we can assume that $a < b$ as if $a = b$ then one can move from $(a,a)$ to $(0,0) \in \A$. By Lemma \ref{Comp}, either $a = \lfloor\phi n\rfloor$ or $a = \lfloor\phi n\rfloor + n$ for some $n$. Assume that $a = \lfloor\phi n\rfloor$. Then $b = \lfloor\phi n\rfloor + i$ for some $i \geq 1$ and $i \neq n$. If $i < n$, we have $\lfloor\phi i\rfloor < \lfloor\phi n\rfloor$. Removing $\lfloor\phi n\rfloor - \lfloor\phi i\rfloor$ tokens from both piles leads $(a,b)$ to $(\lfloor\phi i\rfloor, \lfloor\phi i\rfloor+i) \in \A$. If $i > n$, one can move from $(a,b)$ to $(\lfloor\phi n\rfloor, \lfloor\phi n\rfloor+n)$ by removing $i-n$ tokens from the larger pile. Assume that $a = \lfloor\phi n\rfloor + n$. Then $b = \lfloor\phi n\rfloor + n +i$ for some $i \geq 1$. One can move from $(a,b)$ to $(\lfloor\phi n\rfloor, \lfloor\phi n\rfloor + n)$ by removing $n+i$ tokens from the larger pile.
\end{proof}

Duch\^{e}ne \emph{et al.}~\cite{Ext-Res} defined a \emph{redundant} move of an impartial game to be a move $\mathcal{M}$ in which the set of $\P$-positions of the game is unchanged if the move $\mathcal{M}$ is eliminated. Note that a move $\mathcal{M}$ is not redundant if there exists a position $p$ such that $\mathcal{M}$ is the unique winning move from $p$. As shown in Theorem \ref{BW-P}, $\R$-Wythoff is obtained from Wythoff's game by eliminating  redundant moves.

%====================================================================================================================
\medskip
We next show that $\R$-Wythoff does not have redundant moves.

%====================================================================================================================
\smallskip
\begin{theorem} \label{no-rest-BW}
There is no restriction of $\R$-Wythoff preserving its $\P$-positions.
\end{theorem}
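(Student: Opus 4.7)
The plan is to show that every move of $\R$-Wythoff is non-redundant. I label the moves as follows: for each $k \ge 1$, let $R_k$ denote the rule ``remove $k$ tokens from the larger pile, or from either pile if the piles are equal,'' and $S_k$ the rule ``remove $k$ tokens from both piles.'' For a rule $\mathcal{M}$ to be non-redundant, it is enough to exhibit a position $p$ from which $\mathcal{M}$ is the \emph{unique} move landing in $\A = \{(a_n, b_n) : n \ge 0\}$. Once such a $p$ is in hand, removing $\mathcal{M}$ yields a game $G'$ whose moves form a subset of those of $\R$-Wythoff; if we had $\P(G') = \A$, then $p \notin \A$ would still be $\NP$ in $G'$ and would therefore require some $G'$-move into $\A$, contradicting the uniqueness of $\mathcal{M}$ in $\R$-Wythoff.

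For $R_k$ I take $p = (0, k)$: no $S_j$ is legal because one pile is empty, and among $R_j$-moves only $R_k$ lands on $(0, 0)$, the only $\P$-position with a zero entry. For $S_k$, Lemma \ref{Comp} lets me write $k$ uniquely as either $a_n$ or $b_n$ with $n \ge 1$. I take $p = (k, k)$ in the first case and $p = (a_n + k, b_n + k)$ in the second. In each case $S_k$ reaches a point of $\A$, namely $(0, 0)$ and $(a_n, b_n)$ respectively, and no other $S_j$ does because $S$-moves preserve the pile-difference and that difference is the index of at most one member of $\A$.

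The main technical step is to exclude $R_j$-followers in $\A$. When $p = (k, k)$ with $k = a_n$, an $R_j$-follower $(a_n - j, a_n)$ could lie in $\A$ only if $a_n$ were a $b$-value, contradicting Lemma \ref{Comp}. When $p = (a_n + k, b_n + k)$ with $k = b_n$, I would split $R_j$ into the order-preserving and order-reversing cases and compare each follower to a hypothetical $(a_m, b_m)$; the crux is the classical identity $a_{b_n} = a_n + b_n$, which places the smaller entry $a_n + k$ of $p$ in the $a$-sequence. This identity is provable in a line from $\phi^2 = \phi + 1$ and the observation $\{\phi n\} = \{\phi^2 n\}$, which together give $0 \le \phi b_n - (a_n + b_n) = (2-\phi)\{\phi n\} < 1$. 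Using it, the order-preserving subcase fixes $m = b_n$ and forces $j$ to equal $2b_n - b_{b_n} = -a_n < 0$, while the order-reversing subcase demands that $a_{b_n}$ be simultaneously an $a$-value and a $b$-value; both are impossible by Lemma \ref{Comp}, completing the verification.
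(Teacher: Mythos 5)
Your proof is correct, and while it shares the paper's overall skeleton---for each move, exhibit an $\NP$-position from which that move is the \emph{unique} move into $\A$, so that deleting it (or any set of moves containing it) destroys the $\P$-position characterization---your witnesses and the key technical input are genuinely different. For the single-pile move $R_k$ you use $(0,k)$, which is cleaner than the paper's $(1,2+k)$ and makes the verification immediate. For the diagonal move $S_k$, the paper fixes $n\in\{2,3\}$ and shows by a pigeonhole on the consecutive integers $3+k$ and $4+k$ (two $b$-values cannot differ by $1$) that $\lfloor\phi n\rfloor+k$ can be forced to be an $a$-value; you instead split on whether $k$ itself is an $a$-value or a $b$-value, handling the first case with the equal position $(k,k)$ and the second with $(a_n+k,\,b_n+k)$ via the classical identity $a_{b_n}=a_n+b_n$, whose one-line proof you sketch correctly. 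Both routes then reduce a hypothetical single-pile follower in $\A$ to the same two subcases: the order-reversing one is killed by Lemma \ref{Comp} because the smaller entry of the witness is an $a$-value, and the order-preserving one is killed by index arithmetic ($j=-a_n<0$ in your version, $n=l+r>r$ in the paper's). Your version gives a more conceptual reason why the smaller entry is an $a$-value, at the cost of importing a Beatty-sequence identity the paper avoids; the paper's pigeonhole is more elementary but more ad hoc. One small point to make explicit: in each case the chosen witness $p$ lies outside $\A$ (immediate from the pile-difference observation you already use for the diagonal moves), which is needed for $p$ to be an $\NP$-position of the restricted game.
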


\begin{proof}
We will show that neither of the moves in $\R$-Wythoff is redundant. We need to show that for every positive integer $k$, the following two properties hold:
\begin{itemize}
\item [(i)]  There exists a winning position $(a,b)$ with $a < b$ such that removing $k$ tokens from the larger pile is the unique winning move.
\item [(ii)] There exists a winning position such that removing $k$ tokens from both piles is the unique winning move.
\end{itemize}

For (i), let $a = 1, b = 2+k$. Then $(a,b)$ is an $\NP$-position (i.e., a winning position). Moreover, the move removing $k$ tokens from the larger pile is the unique winning move. In fact, the other type of move is to remove 1 token from both piles leading $(a,b)$ to (0,1+k) which is an $\NP$-position.

For (ii), we first claim that there exist positive integers $n,m$ such that $\lfloor\phi n\rfloor + k = \lfloor\phi m\rfloor$. In fact, set $n_1 = \lfloor 2\phi\rfloor = 3$, $n_2 = \lfloor3\phi \rfloor = 4$, $m_1 = 3+k$, and $m_2 = 4 + k$. We show that either $m_1$ or $m_2$ is of the form $\lfloor\phi m\rfloor$ for some $m$. Assume by contradiction that neither $m_1$ nor $m_2$ is of the form $\lfloor\phi m\rfloor$. By Lemma \ref{Comp}, there exist $r_1 < r_2$ such that $m_1 = \lfloor\phi r_1\rfloor +r_1$, $m_2 = \lfloor\phi r_2\rfloor +r_2$. Note that $\lfloor\phi r_1\rfloor < \lfloor\phi r_2\rfloor$ and so
\[1 = m_2 - m_1  = \lfloor\phi r_2\rfloor +r_2 - (\lfloor\phi r_1\rfloor +r_1)
                 = \lfloor\phi r_2\rfloor - \lfloor\phi r_1\rfloor + r_2 - r_1 \geq 2 \]
giving a contradiction. Now, if $m_1 = \lfloor\phi m\rfloor$ (resp. $m_2 = \lfloor\phi m\rfloor$), let $n = 2$ (resp. $n = 3$). Then $n, m$ satisfy the condition $\lfloor\phi n\rfloor + k = \lfloor\phi m\rfloor$. Let $a = \lfloor\phi n\rfloor + k$, $b = \lfloor\phi n\rfloor +n + k$. Then $(a,b)$ is an $\NP$-position and removing $k$ tokens from both piles is a winning move. It remains to show that this is the unique winning move. Assume by contradiction that there exists another winning move from $(a,b)$. This move must take some $l$ tokens from the larger pile leading $(a,b)$ to some position $(\lfloor\phi r\rfloor, \lfloor\phi r\rfloor + r)$. First consider the case $b-l = \lfloor\phi r\rfloor$, $a = \lfloor\phi r\rfloor + r$. We have shown the existence of $m$ such that $\lfloor\phi m\rfloor = \lfloor\phi n\rfloor + k = a$ and so $\lfloor\phi m\rfloor = \lfloor\phi r\rfloor + r$. However, this equality cannot occur by Lemma \ref{Comp}. Now consider the case $a = \lfloor\phi r\rfloor$, $b-l = \lfloor\phi r\rfloor + r$. We have
\begin{align*}
\begin{cases}
a = \lfloor\phi n\rfloor + k = \lfloor\phi r\rfloor,\\
b-l = \lfloor\phi n\rfloor + n + k - l = \lfloor\phi r\rfloor +r.
\end{cases}
\end{align*}
The first equation implies $n < r$. By substituting $\lfloor\phi n\rfloor + k$ from the first equation into the second one, we get $\lfloor\phi r\rfloor + n - l = \lfloor\phi r\rfloor + r$ which implies $n = l+r > r$ giving a contradiction. Therefore, this case is impossible.
\end{proof}

%==================================================================================

\begin{table}[ht]
\begin{center}
\begin{tabular}{c|cccccccccc}
9     &9&9&9 &5 &9 &1 &9 &5 &9 &10 \\
8     &8&8&8 &8 &8 &2 &8 &6 &7 &9 \\
7     &7&7&7 &7 &0 &7 &7 &8 &6 &5 \\
6     &6&6&6 &1 &1 &4 &5 &7 &8 &9 \\
5     &5&5&5 &0 &5 &6 &4 &7 &2 &1 \\
4     &4&4&4 &2 &3 &5 &1 &0 &8 &9\\
3     &3&3&3 &4 &2 &0 &1 &7 &8 &5 \\
2     &2&0&1 &3 &4 &5 &6 &7 &8 &9 \\
1     &1&2&0 &3 &4 &5 &6 &7 &8 &9 \\
0     &0&1&2 &3 &4 &5 &6 &7 &8 &9 \\
\hline
a/b &0&1&2&3&4&5&6&7&8&9
 \end{tabular}
\caption{Sprague-Grundy values $\G_{\R}(a,b)$ for $a,b\leq 9$}\label{T1}
\end{center}
\end{table}

%====================================================================================================================
Table \ref{T1} gives the  Sprague-Grundy values of position $(a,b)$ for $a,b\leq 9$. We now determine the positions of Sprague-Grundy value 1.

%====================================================================================================================
\smallskip
\begin{theorem} \label{BW-V1}
In $\R$-Wythoff, the position $(a,b)$ with $a \leq b$ has Sprague-Grundy value 1 if and only if $(a,b)$ is an element of the set
\[\B = \{(2,2), (4,6), (\lfloor \phi n \rfloor - 1, \lfloor \phi n \rfloor +n-1) | n \geq 1, n \neq 2\}. \]
\end{theorem}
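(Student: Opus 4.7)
The plan is to argue by strong induction on $a+b$, using Theorem~\ref{BW-P} (which gives $\G_{\R}(a,b)=0 \iff (a,b)\in\A$) as a black box. The inductive step reduces to three auxiliary assertions: (1) every element of $\B$ has a follower in $\A$; (2) no element of $\B$ has a follower in $\B$; and (3) every $(a,b)\notin\A\cup\B$ with $a\leq b$ has a follower in $\B$. Once these are established, the Sprague-Grundy recursion and the induction hypothesis force $\G_{\R}(a,b)=1$ at $(a,b)\in\B$ (the set of follower values contains $0$ by (1) and misses $1$ by (2)) and $\G_{\R}(a,b)\neq 1$ off $\A\cup\B$ (the set of follower values contains $1$ by (3)).

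Claim (1) I would settle by explicit construction. For $(2,2)$ and $(4,6)$, the moves to $(0,0)$ and $(3,5)$ work. For a general $(\lfloor\phi n\rfloor-1,\lfloor\phi n\rfloor+n-1)$ with $n\neq 2$, apply Lemma~\ref{Comp} to the small coordinate $\lfloor\phi n\rfloor-1$: if it equals $\lfloor\phi m\rfloor$ for some $m<n$, remove $n-m$ tokens from the larger pile; if it equals $\lfloor\phi m\rfloor+m$, remove $n+m$ tokens from the larger pile so that it shrinks below the fixed smaller pile. In both cases the follower is the Wythoff $\P$-position $(\lfloor\phi m\rfloor,\lfloor\phi m\rfloor+m)\in\A$.

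For claim (2), moves from $(\lfloor\phi n\rfloor-1,\lfloor\phi n\rfloor+n-1)$ either fix the smaller coordinate (when only the larger pile is changed and remains the larger), replace the smaller coordinate by something strictly smaller (when the larger pile drops below the other), or preserve the difference $n$ (equal removal from both piles). In each of these regimes, Lemma~\ref{Comp} together with the disjointness of the sequences $\{\lfloor\phi m\rfloor\}$ and $\{\lfloor\phi m\rfloor+m\}$ rules out matching any element of $\B$; crucially, the exclusion of $n=2$ from the parameterization is exactly what blocks the pair $(4,6)$ from appearing in the equal-removal case. The elements $(2,2)$ and $(4,6)$ themselves are verified by direct enumeration of their followers.

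The main obstacle is claim (3), owing to the exceptional pair $(4,6)$ and the excluded index $n=2$ in $\B$. I would first dispatch $a=b$ by moving to $(0,1)$ when $a=1$ or to $(2,2)$ when $a\geq 3$. For $a<b$, apply Lemma~\ref{Comp} to $a+1$. If $a+1=\lfloor\phi n\rfloor$ with $n\neq 2$, reach $(\lfloor\phi n\rfloor-1,\lfloor\phi n\rfloor+n-1)$ by a larger-pile removal when $b>\lfloor\phi n\rfloor+n-1$; otherwise $b$ lies strictly between the two coordinates, and setting $m=b-a$ we remove $\lfloor\phi n\rfloor-\lfloor\phi m\rfloor$ tokens from both piles to land on $(\lfloor\phi m\rfloor-1,\lfloor\phi m\rfloor+m-1)$. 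The delicate sub-case $m=2$ is re-routed to $(4,6)$, which works whenever $a\geq 5$; the only obstruction is $n=3$, but then $(a,b)=(3,5)\in\A$, contradicting the hypothesis. If $a+1=\lfloor\phi n\rfloor+n$ with $n\neq 2$, shrink the larger pile down to $\lfloor\phi n\rfloor-1$, producing the sorted follower $(\lfloor\phi n\rfloor-1,\lfloor\phi n\rfloor+n-1)\in\B$. The remaining small cases $a=2$ (from $n=2$ in the first branch) and $a=4$ (from $n=2$ in the second) are handled by hand, targeting $(2,2)$, $(4,6)$, or $(0,1)$ depending on $b$.
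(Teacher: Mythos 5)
Your proposal is correct and follows essentially the same strategy as the paper: it verifies that $\B$ is disjoint from the $\P$-positions (your claim (1) is just the paper's condition $\B\cap\P=\emptyset$ combined with Theorem~\ref{BW-P}), that no move connects two elements of $\B$, and that every position outside $\A\cup\B$ has a follower in $\B$, with the same case split on $a=\lfloor\phi n\rfloor-1$ versus $a=\lfloor\phi n\rfloor+n-1$ and the same special handling of $(2,2)$, $(4,6)$, the $b=a+2$ sub-case, and $a=4$. The only cosmetic difference is that you make the induction on $a+b$ explicit where the paper appeals directly to the mex characterization of Sprague-Grundy value $1$.
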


%====================================================================================================================
\smallskip
Before proving Theorem \ref{BW-V1}, we need some lemmas. %====================================================================================================================
\smallskip
\begin{lemma} \label{A-B}
For all $m \neq n$, we have
\[(\lfloor \phi m \rfloor, \lfloor \phi m \rfloor +m) \neq (\lfloor \phi n \rfloor - 1, \lfloor \phi n \rfloor +n-1).\]
\end{lemma}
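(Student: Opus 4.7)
The plan is to proceed by contradiction and show that equating the two pairs immediately forces $m = n$. Suppose there existed $m \neq n$ with
\[(\lfloor \phi m \rfloor, \lfloor \phi m \rfloor + m) = (\lfloor \phi n \rfloor - 1, \lfloor \phi n \rfloor + n - 1).\]
Then I would equate the two coordinates separately, obtaining
\[\lfloor \phi m \rfloor = \lfloor \phi n \rfloor - 1 \quad \text{and} \quad \lfloor \phi m \rfloor + m = \lfloor \phi n \rfloor + n - 1.\]

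Subtracting the first equation from the second causes both floor terms and both $-1$'s to cancel, leaving simply $m = n$, which contradicts the hypothesis $m \neq n$.

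Notice that this argument does not actually invoke Lemma \ref{Comp} or any property of $\phi$; the two floor values are treated as opaque integers and the cancellation is purely arithmetic. The only point that needs care is the placement of the $-1$: it appears in the first coordinate of the second pair and contributes $+1$ when the second coordinate of the first pair is subtracted from the second coordinate of the second pair, so the two $-1$'s cancel exactly. There is no real obstacle here — the lemma is essentially a bookkeeping statement, and its role in the paper is presumably to justify that the description of the positions of Sprague-Grundy value $1$ in Theorem \ref{BW-V1} produces points genuinely distinct from the Wythoff $\P$-positions listed in $\A$.
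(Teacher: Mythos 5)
Your proof is correct and is essentially the paper's own argument: the paper likewise equates the coordinates and substitutes the first equation into the second to derive $m=n$, which is the same cancellation you perform by subtraction. (The paper's parenthetical observation that the first equation forces $m<n$ is not actually needed, and your remark that no property of $\phi$ is used is accurate.)
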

\begin{proof}
Assume by contradiction that there exist nonnegative integers $m \neq n$ such that
\begin{align*}
\begin{cases}
\lfloor \phi m \rfloor = \lfloor \phi n \rfloor - 1,\\
\lfloor \phi m \rfloor + m = \lfloor \phi n \rfloor +n-1.
\end{cases}
\end{align*}
The first equation implies $m < n$. By substituting $\lfloor \phi m \rfloor$ from the first equation into the second one and then simplifying it, we get $m = n$, giving a contradiction.
\end{proof}

%====================================================================================================================
\smallskip
\begin{lemma} \label{A-B1}
Set
\[\C = \{(\lfloor \phi n \rfloor - 1, \lfloor \phi n \rfloor +n-1) | n \geq 1  \}\]
and let $(a,b) \in \C$ with $a \leq b$. For $x \geq 1$, $y \geq 0$ such that $x \geq y$, the following two conditions hold:
\begin{itemize}
\item [(i)] $(a, b-x) \notin \C$,
\item [(ii)] $(a-x, b-y) \notin \C$
\end{itemize}
\end{lemma}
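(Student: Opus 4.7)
The plan is to argue both parts by contradiction. Writing $a = \lfloor\phi n\rfloor - 1$ and $b = \lfloor\phi n\rfloor + n - 1$, I would suppose the modified pair lies in $\C$, match it against the template $(\lfloor\phi m\rfloor - 1, \lfloor\phi m\rfloor + m - 1)$ for some $m \geq 1$, and derive a contradiction. Since $\C$ consists of unordered pairs, each part splits into two sub-cases depending on whether the move has reversed the order of the two coordinates; the resulting system of two equations in $m$ can then be ruled out either by Lemma \ref{Comp} or by the inequalities $x \geq 1$ and $x \geq y$.

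For (i): if $a \leq b - x$, matching first entries gives $\lfloor\phi m\rfloor = \lfloor\phi n\rfloor$, forcing $m = n$, and then the second equation gives $b - x = b$, contradicting $x \geq 1$. If $a > b - x$, matching in reversed order gives $a = \lfloor\phi m\rfloor + m - 1 = \lfloor\phi^2 m\rfloor - 1$, so $\lfloor\phi n\rfloor = \lfloor\phi^2 m\rfloor$, contradicting Lemma \ref{Comp}.

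For (ii): if $a - x \leq b - y$, the first coordinate match gives $\lfloor\phi m\rfloor = \lfloor\phi n\rfloor - x$, hence $m < n$ since $x \geq 1$; combining with the second equation $\lfloor\phi m\rfloor + m = \lfloor\phi n\rfloor + n - y$ produces $m = n + (x - y) \geq n$, a contradiction. If $a - x > b - y$, the reversed match yields $\lfloor\phi m\rfloor = \lfloor\phi^2 n\rfloor - y$ and $\lfloor\phi m\rfloor + m = \lfloor\phi n\rfloor - x$; subtraction gives $m = -n + (y - x) \leq -n$, impossible since $m \geq 1$.

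The main obstacle is keeping track of the reversed-order sub-cases, since a move can flip which coordinate of an unordered position is smaller. Without this care one might overlook followers that, as ordered tuples, do not resemble any element of $\C$ but as unordered pairs do. In each reversed sub-case, however, the defining equation conveniently expresses $\lfloor\phi n\rfloor$ as an element of the $\lfloor\phi^2 \cdot\rfloor$ sequence (up to a shift by $x$ or $y$), at which point Lemma \ref{Comp} or an elementary sign argument closes things out.
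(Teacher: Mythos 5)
Your proof is correct and follows essentially the same route as the paper's: match the modified pair against the parametrization of $\C$, split on the ordering of the two coordinates, and dispose of each case via Lemma \ref{Comp} or the inequalities $x\geq 1$, $x\geq y$. The only difference is cosmetic: in part (ii) the paper simply observes that $a-x < b-y$ (since $b-y-(a-x)=n+(x-y)>0$) and so skips your reversed sub-case, which you instead rule out explicitly with the sign argument $m\leq -n$.
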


\begin{proof}
Assume that $a = \lfloor \phi n \rfloor - 1$, $b = \lfloor \phi n \rfloor +n-1$ for some $n$.

For (i), assume by contradiction that there exists $x \geq 1$ such that $(a, b-x) \in \C$. Then, there exists $m < n$ such that either
\begin{align*}
\begin{cases}
a = \lfloor \phi n \rfloor - 1 = \lfloor \phi m \rfloor - 1,\\
b-x = \lfloor \phi n \rfloor +n-1 - x = \lfloor \phi m \rfloor +m-1
\end{cases}
\end{align*}
or
\begin{align*}
\begin{cases}
b-x = \lfloor \phi n \rfloor +n - 1-x = \lfloor \phi m \rfloor - 1,\\
a = \lfloor \phi n \rfloor -1 = \lfloor \phi m \rfloor +m-1.
\end{cases}
\end{align*}
The first equation of the former case implies $m = n$ giving a contradiction. The second equation of the latter case implies $\lfloor \phi n \rfloor = \lfloor \phi m \rfloor +m$ contradicting Lemma \ref{Comp}. Therefore, (i) holds.

For (ii), assume by contradiction that there exist $x, y$ with $x \geq 1$, $y \geq 0$, $x \geq y$ such that $(a-x, b-y) \in \C$. Note that $a-x < b-y$. Then there exists $m < n$ satisfying
\begin{align*}
\begin{cases}
a-x = \lfloor \phi m \rfloor - 1,\\
b-y = \lfloor \phi m \rfloor +m-1
\end{cases}
&\Rightarrow
\begin{cases}
\lfloor \phi n \rfloor - 1 - x = \lfloor \phi m \rfloor - 1,\\
\lfloor \phi n \rfloor +n-1 - y = \lfloor \phi m \rfloor +m-1
\end{cases}\\
&\Rightarrow
\begin{cases}
\lfloor \phi n \rfloor - \lfloor \phi m \rfloor  = x,\\
\lfloor \phi n \rfloor - \lfloor \phi m \rfloor = y + m - n.
\end{cases}
\end{align*}
It follows that $x = y + m - n \leq y-1$ as $m < n$. This is a contradiction. Therefore, (ii) holds.
\end{proof}

%====================================================================================================================
\smallskip
\begin{proof}[Proof of Theorem \ref{BW-V1}]
Recall that the set of $\P$-positions of the game is
\[\P = \{(\lfloor \phi m \rfloor, \lfloor \phi m \rfloor + m)| m \geq 0\}.\]

By the definition of Sprague-grundy function, a position $p$ has Sprague-Grundy value $k > 0$ if and only if the following two conditions hold
\begin{itemize}
\item if there exists a move from $p$ to some $q$, then $\G(p) \neq \G(q)$;
\item for every $l < k$, there exists a follower $q$ of $p$ such that $\G(q) = l$.
\end{itemize}
Therefore, we need to prove that
\begin{itemize}
\item [(i)] $\B \cap \P = \emptyset$;
\item [(ii)] there is no move from a position in $\B$ to a position in $\B$;
\item [(iii)] from every position not in $\B \cup\P$, there exists a move to some position in $\B$.
\end{itemize}

For (i), since $(2,2), (4,6) \notin \P$, it is sufficient to show that
\[(\lfloor \phi m \rfloor, \lfloor \phi m \rfloor + m) \neq (\lfloor \phi n \rfloor - 1, \lfloor \phi n \rfloor +n-1)\] for all $m,n$.  This is true by Lemma \ref{A-B}.

For (ii), let $p = (a,b) \in \B$. We show that for all $x, y > 0$, the following two properties hold
\begin{subequations}
\begin{equation} \label{sub-B-x}
(a-x, b-x) \notin \B,
\end{equation}
\begin{equation} \label{sub-B-y}
(a,b-y) \notin \B.
\end{equation}
\end{subequations}
It can be checked manually that (\ref{sub-B-x}) and (\ref{sub-B-y}) hold if $(a,b)$ is either $(2,2)$ or $(4,6)$. Assume now that $a = \lfloor \phi n \rfloor - 1$, $b = \lfloor \phi n \rfloor +n-1$ for some $n$. We first show that (\ref{sub-B-x}) holds. Assume by contradiction that there exists $x > 0$ such that $(a-x, b-x) \in \B$. Then, by Lemma \ref{A-B1}, either $(a-x, b-x) = (2,2)$ or $(a-x, b-x) = (4,6)$. The first case cannot occur as $a < b$. The second case occurs if and only if $b-a = 6-4 = 2$ or $n = 2$. But when $n = 2$, $(a,b) \notin \B$. Therefore, (\ref{sub-B-x}) holds. We now show that (\ref{sub-B-y}) holds. Assume by contradiction that there exists $y > 0$ such that $(a, b-y) \in \B$. Then, by Lemma \ref{A-B1}, either $(a, b-y) = (2,2)$, or $(a, b-y) = (4,6)$. The first case cannot occur as $a = \lfloor \phi n \rfloor - 1$ with $n \neq 2$ and so $a \neq 2$. The second case implies either $a = 4$ or $a = 6$ and so either $\lfloor \phi n \rfloor = 5$ or $\lfloor \phi n \rfloor = 7$. However, there is no $n$ such that either $\lfloor \phi n \rfloor = 5$ or $\lfloor \phi n \rfloor = 7$ as $\lfloor \phi 3 \rfloor = 4$, $\lfloor \phi 4 \rfloor = 6$, and $\lfloor \phi 5 \rfloor = 8$. Therefore, (\ref{sub-B-y}) holds.

For (iii), let $p = (a,b) \notin \B \cup \P$ with $a \leq b$. If $a = 0$ then $b > 1$. Removing $b-1$ tokens leads $p$ to $(0,1) \in \B$. If $a = 1$, then one can remove the whole pile of size $b$. If $a = 2$, then $b > 2$ as $(2,2) \in \B$. Removing $b-2$ tokens from the larger pile leads $p$ to $(2,2) \in \B$. So we may suppose that $a \geq 3$. We can also assume that $a < b$ as if otherwise, one can move from $p$ to $(2,2) \in \B$ by removing $a-2$ tokens from both piles. By Lemma \ref{Comp}, there exists $n$ such that either $a = \lfloor \phi n \rfloor - 1$ or $a = \lfloor \phi n \rfloor + n - 1$.

Consider the case $a = \lfloor \phi n \rfloor - 1$. We have $b \neq \lfloor \phi n \rfloor + n - 1$. If $b = \lfloor \phi n \rfloor + n - 1 + i$ for some positive integer $i$, one can remove $i$ tokens from the pile of size $b$ leading $p$ to $(\lfloor \phi n \rfloor - 1, \lfloor \phi n \rfloor + n - 1) \in \B$. If $b = \lfloor \phi n \rfloor + n - 1-i$ for some $1 \leq i < n$ with $i\not=n-2$, one can remove $\lfloor \phi n \rfloor - \lfloor \phi (n-i) \rfloor$ tokens from both piles leading $p$ to $(\lfloor \phi (n-i) \rfloor - 1, \lfloor \phi (n-i) \rfloor +(n-i)-1)$, which belongs to $\B$ as $n-i\not=2$. If $b = \lfloor \phi n \rfloor + 1$, then $b=a+2$ and we have $a \geq 5$ as $(3,5) , (4,6) \in \P$. Then there is a move from $p$ to $(4,6)$ by removing $a-4$ coins from both piles.

Consider the case $a = \lfloor \phi n \rfloor + n - 1$. Note that $a \neq 3$. First assume that $a = 4$. If $b = 5$, removing 4 tokens from both piles leads $p$ to $(0,1) \in \B$. If $b > 5$ then $b > 6$ as $(4,6) \in \B$. One can remove $b-6$ tokens from the pile of size $b$ leading $p$ to $(4,6) \in \B$. Now assume that $a \geq 5$. Since $b > a$, $b = \lfloor \phi n \rfloor + n - 1 + i$ for some positive integer $i$. Removing $n+i$ tokens from the pile of size $b$ leads $p$ to $(\lfloor \phi n \rfloor - 1, \lfloor \phi n \rfloor + n - 1) \in \B$.
\end{proof}

In the remaining part of this section, we further  investigate the Sprague-Grundy function of $\R$-Wythoff. Let $a, c$ be nonnegative integers. The next theorem answers the question as to whether there exists $b$ such that $\G_{\R}(a,b) = c$.

%====================================================================================================================
\smallskip
\begin{theorem} \label{BW-Row}
Let $a, c$ be nonnegative integers. There exists an integer $b$ such that $\G_{\R}(a,b) = c$.
\end{theorem}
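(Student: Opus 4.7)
My plan is to prove this by induction on $a$, with the crucial ingredient being a ``distinctness in the tail'' observation: within any single row, each Sprague-Grundy value can appear only finitely often. The observation itself is elementary---for $b > a$, the larger pile of $(a,b)$ is the $b$-pile, and one can reach $(a, b')$ for every $0 \le b' < b$ by removing $b - b'$ tokens from it. Hence $\G_\R(a,b) \ne \G_\R(a,b')$ whenever $b > a$ and $b' < b$, so the values $\{\G_\R(a,b) : b > a\}$ are pairwise distinct and distinct from the at most $a+1$ body values at positions $b \le a$; thus each nonnegative integer occurs in row $a$ at most $a+2$ times.

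The base case $a = 0$ is immediate: from $(0, b)$ only the ``remove from the larger pile'' moves are legal (a same-both move would take a positive amount from the empty pile), so $\G_\R(0, b) = b$ covers every value. For the inductive step, I would assume the claim for all $a' < a$, fix $c$, and argue by contradiction: suppose $\G_\R(a, b) \ne c$ for every $b$. For $b > a$ we have
$$\G_\R(a, b) = \text{mex}\bigl(\{\G_\R(a, b') : 0 \le b' < b\} \cup D_b\bigr), \qquad D_b = \{\G_\R(a-k, b-k) : 1 \le k \le a\}.$$
Since $c$ is absent from the first set, the mex can avoid $c$ only if $\G_\R(a,b) < c$ or $c \in D_b$. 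Distinctness in the tail allows only finitely many $b$ to satisfy $\G_\R(a,b) < c$, so for every sufficiently large $b$ we are forced to have $c \in D_b$; that is, some $k \in \{1, \ldots, a\}$ realizes $\G_\R(a-k, b-k) = c$.

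The inductive hypothesis now delivers the contradiction: for each fixed $k$, the value $c$ does appear in row $a-k$, but by the distinctness observation applied to that row, it appears only finitely often. Hence $\{b : \G_\R(a-k, b-k) = c\}$ is finite for each $k$, and the union over the $a$ possible values of $k$ remains finite, contradicting the requirement that $c \in D_b$ for every large $b$. The main obstacle is essentially the distinctness observation; once that is in hand, the rest reduces to a clean finite-versus-infinite pigeonhole.
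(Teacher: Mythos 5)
Your proof is correct and follows essentially the same route as the paper's: argue by contradiction from the mex recursion, force $c$ to appear among the diagonal followers $\G_{\R}(a-k,b-k)$ for all sufficiently large $b$, and contradict this using the fact that a fixed value occurs only finitely often in each lower row (your ``distinctness in the tail'' is exactly the paper's pigeonhole on the diagonal index followed by exhibiting a move between two equal-valued positions in the same row). The only cosmetic difference is that your induction on $a$ is not actually load-bearing---the finiteness of occurrences of $c$ in row $a-k$ follows from the distinctness observation alone, not from the inductive hypothesis---so your argument in fact works directly for every $a$ without induction.
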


\begin{proof}
Note that the theorem holds for $a = 0$. Assume that $a \geq 1$. By Lemma \ref{Comp}, there exists $m$ such that either $a = \lfloor \phi m \rfloor$ or $a = \lfloor \phi m \rfloor + m$. The former case gives $\G_{\R}(a,\lfloor \phi m \rfloor + m) = 0$ and the latter case gives $\G_{\R}(a,\lfloor \phi m \rfloor) = 0$. Therefore, the theorem holds for $c = 0$. Assume that $c > 0$ and assume by contradiction that, for some $a\geq 0$,  the sequence $R_a = \{\G_{\R}(a,n)\}_{n \geq 0}$ does not contain $c$. We can assume that $c$ is the smallest integer not in the sequence $R_a$. Then there exists the smallest integer $b_0 \geq a$ such that
\begin{align} \label{BW-b0}
\{0, 1, \ldots,c -1\} \subseteq \{\G_{\R}(a,i) | i \leq b_0-1\}.
\end{align}
For each $s \geq 1$, let $b_s = b_0+s(a+1)$. We have
\[
\G_{\R}(a,b_s) = mex\{\G_{\R}(a,b_s-i), \G_{\R}(a-j,b_s-j) | 1 \leq i  \leq b_s, 1 \leq j \leq a\}.
\]
By (\ref{BW-b0}), the $mex$ set contains $\{0, 1, \ldots,c -1\}$. Note that $\G_{\R}(a,b_s) \neq c$. Therefore, $\G_{\R}(a,b_s) >c $ and so the $mex$ set contains $c$. Since $\G_{\R}(a,b_s-i) \neq c $ for all $i$, there exists some $j_s \leq a$ such that $\G_{\R}(a-j_s,b_s-j_s) = c $. Note that as $s$ varies, the integers $b_s$ assume infinitely many values, while $j_s\leq a$ for each $s$.
 So there must exist $s_1 < s_2$ such that $j_{s_1} = j_{s_2}$ and $\G_{\R}(a-j_{s_1},b_{s_1}-j_{s_1}) = \G_{\R}(a-j_{s_2},b_{s_2}-j_{s_2})$. This is impossible since one can move from $(a-j_{s_2},b_{s_2}-j_{s_2})$ to $(a-j_{s_1},b_{s_1}-j_{s_1})$ by removing $b_{s_2}-b_{s_1}$ tokens from the larger pile. Thus, the sequence $R_a$ contains $c$ and so $\G(a,b) = c$ for some $b$.
\end{proof}

%====================================================================================================================
\smallskip
\begin{theorem} \label{BW-Diagonal}
Let $a, c$ be nonnegative integers. There exists a unique $b$ such that $\G_{\R}(b,a+b) =c$.
\end{theorem}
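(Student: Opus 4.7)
Uniqueness is immediate: for $b_1 < b_2$, equal removal of $b_2-b_1$ tokens takes $(b_2, a+b_2)$ to $(b_1, a+b_1)$, so these two diagonal positions cannot share a Sprague-Grundy value.

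For existence, I follow the template of the proof of Theorem \ref{BW-Row}. Assume, for contradiction, that $c$ is the smallest nonnegative integer missing from the sequence $\{\G_{\R}(b, a+b)\}_{b\geq 0}$, and fix $b_0$ with $\{0,\ldots,c-1\} \subseteq \{\G_{\R}(b, a+b) : b \leq b_0\}$. For every $b > b_0$, the equal-removal followers $(b-j, a+b-j)$ with $1 \leq j \leq b$ already realize every Grundy value below $c$. Since $\G_{\R}(b, a+b) \neq c$ by hypothesis, necessarily $\G_{\R}(b, a+b) > c$, so $c$ lies in the mex set; as no diagonal follower has Grundy value $c$, there must exist $i_b \in \{1,\ldots,a+b\}$ with $\G_{\R}(b, a+b-i_b) = c$.

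Set $T_c := \{(u,v) : u \leq v,\ \G_{\R}(u,v) = c\}$. Two follower relations drive the argument: (a) any two elements of $T_c$ sharing the same first coordinate are related by ``reduce the larger pile'' (so $T_c$ has at most one element per row); and (b) any two elements sharing the same difference $v-u$ are related by equal removal (so $T_c$ has at most one element per diagonal). If $i_b \leq a$, the element $(b, a+b-i_b) \in T_c$ has difference $a-i_b \in \{0,\ldots,a-1\}$; a pigeonhole on this difference therefore allows only finitely many $b > b_0$ to satisfy $i_b \leq a$, lest two of them violate (b). Hence the set $B := \{b > b_0 : i_b > a\}$ omits only finitely many integers greater than $b_0$, so its complement among the nonnegative integers is finite.

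For $b \in B$ put $u_b := a+b-i_b$, so $0 \leq u_b < b$ and $(u_b, b) \in T_c$. By (a) the map $b \mapsto u_b$ is injective on $B$. The decisive step is showing $u_b \notin B$: otherwise $u_{b_2} = b_1$ for some $b_1 \in B$ with $b_1 < b_2$, and a single ``reduce the larger pile'' move of $b_2 - u_{b_1}$ tokens from $(u_{b_2}, b_2) = (b_1, b_2)$ (valid because $u_{b_1} \leq b_1 < b_2$) reaches $(u_{b_1}, b_1) \in T_c$, contradicting that both positions have Grundy value $c$. Thus $b \mapsto u_b$ injects the infinite set $B$ into its finite complement, the desired contradiction. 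The key obstacle is isolating this invariant; the natural index $i_b$ has a range $\{1,\ldots,a+b\}$ that grows with $b$ and resists a direct pigeonhole of the kind used in Theorem \ref{BW-Row}.
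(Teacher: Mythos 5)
Your proof is correct. It shares the paper's opening move --- take $c$ minimal among the values missing from the diagonal of difference $a$, fix $b_0$ realizing $0,\dots,c-1$, and force, for each $b>b_0$, an off-diagonal follower $(b,\,a+b-i_b)$ of Grundy value $c$ --- but the finishing combinatorial argument is genuinely different. The paper splits into the cases $a=0$ and $a>0$; for $a>0$ it fixes a threshold $T_0$, examines the $2a$ consecutive diagonal positions $(T_0+k,\,a+T_0+k)$, proves the auxiliary inequality $i_k-a<j_k<i_k+a$ by a second mex computation, and pigeonholes on the $2a-1$ possible values of $i_k-j_k$. You instead isolate the two structural facts that $\G_{\R}(\cdot)=c$ can hold at most once per row and at most once per difference-diagonal, deduce that only finitely many $b>b_0$ can have $i_b\le a$, and then show that $b\mapsto u_b=a+b-i_b$ injects the infinite set $B$ into its finite complement; the decisive observation $u_b\notin B$ follows from the single reduce-the-larger-pile move from $(b_1,b_2)$ to $(u_{b_1},b_1)$. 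Both arguments rest on the same two follower relations, but your packaging avoids the case split on $a$ and the auxiliary inequality, and is arguably cleaner and more uniform (it handles $c=0$ and $a=0$ without special treatment).
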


\begin{proof}
The uniqueness holds as there exists a move from $(b_2,a+b_2)$ to $(b_1,a+b_1)$ if $b_1 < b_2$. Note that $\G_{\R}(\lfloor \phi a \rfloor,\lfloor \phi a \rfloor+a) = 0$ and so the theorem holds for $c = 0$. Assume that $c > 0$ and assume by contradiction that, for some $a \geq 0$, the sequence $\{\G_{\R}(n,a+n)\}_{n \geq 0}$ does not contain $c$. We can assume that $c$ is the smallest integer not in that sequence. Then there exists a smallest integer $b_0 > 0$ such that
\begin{align} \label{BW-b00}
\{0,1,\ldots, c-1\} \subseteq \{\G_{\R}(i,a+i) | i \leq b_0-1\}.
\end{align}

For each $s \leq b_0$, there exists at most one value $t_s \geq a+b_0$ such that $\G_{\R}(s,t_s) = c$. Let $S$ be the set of the values $t_s$, and set
\[
T_0 = \begin{cases}\max(S),&\ \text{if}\ S\not=\emptyset\\
a+b_0, &\ \text{otherwise}.
\end{cases}
\]
Then $\G_{\R}(s,t) \neq c$ for $s \leq b_0, t > T_0$.

We consider two possibilities for $a$. Assume that $a = 0$. Then $\G(i,i) \neq c$ for all $i$. Set $m = T_0+1$. We have
\[
\G_{\R}(m,m) = mex\{\G_{\R}(m,i), \G_{\R}(j,j) | i, j \leq m-1\}.
\]
By (\ref{BW-b00}), $\G_{\R}(m,m) \geq c$ and so $\G_{\R}(m,m) > c$ as $\G_{\R}(m,m) \neq c$. Since $\G_{\R}(j,j) \neq c$ for all $j$, there exists $i_0 \leq m-1$ such that $\G_{\R}(m,i_0) = c$. Note that $i_0 > b_0$ as otherwise $m \leq T_0$ giving a contradiction with $m = T_0+1$. We have
\[
\G_{\R}(i_0,i_0) = mex\{\G_{\R}(i_0,j), \G_{\R}(l,l) | j,l \leq i_0-1\}.
\]
By (\ref{BW-b00}), $\G_{\R}(i_0,i_0) \geq c$ and so $\G_{\R}(i_0,i_0) > c$ as $\G_{\R}(i_0,i_0) \neq c$. Since $\G_{\R}(l,l) \neq c$ for all $l$, there exists $j_0 < i_0$ such that $\G_{\R}(i_0,j_0) = c$. However, there exists a move from $(m,i_0)$ to $(i_0,j_0)$ as $m > i_0 > j_0$. This is a contradiction.

Assume that $a > 0$. For $k \in  \{1, 2, \ldots, 2a\}$, let $i_k = T_0+k$. We have
\begin{align*}
\G_{\R}&(i_k,a+i_k) =   \\
       &mex\{\G_{\R}(i,a+i), \G_{\R}(i_k,j) | i \leq i_k-1, j \leq i_k+a-1\}.
\end{align*}
By (\ref{BW-b00}), $\G_{\R}(i_k,a+i_k) \geq c$ and so $\G_{\R}(i_k,a+i_k) > c$ as $\G_{\R}(i_k,a+i_k) \neq c$. Therefore, there exists $j_k \leq a+i_k-1$ such that $\G_{\R}(i_k,j_k) = c$. Note that $j_k > b_0$ as otherwise $i_k \leq T_0$ giving a contradiction. We claim that $j_k > i_k-a$. Assume by contradiction that $j_k \leq i_k - a$ and so $a+j_k \leq i_k$. We have
\begin{align*}
\G_{\R}&(j_k,a+j_k) =   \\
       &mex\{\G_{\R}(i,a+i), \G_{\R}(j_k,l) | i \leq j_k-1, l \leq a+j_k-1\}.
\end{align*}
By (\ref{BW-b00}), $\G_{\R}(j_k,a+j_k) \geq c$ and so $\G_{\R}(j_k,a+j_k) > c$ as $\G_{\R}(j_k,a+j_k) \neq c$. Then, there exists $l_k \leq a+j_k-1$ such that $\G_{\R}(j_k,l_k) = c$. This is impossible since there is a move from $(j_k,i_k)$ to $(j_k,l_k)$ as $l_k < a+j_k \leq i_k$. Thus, for each $k$, there exists $j_k$ such that $\G_{\R}(i_k,j_k) = c$ and $i_k-a < j_k < i_k+a$. The last inequalities imply $-a < i_k - j_k < a$ and so there are at most $2a-1$ values $i_k - j_k$. However, there are $2a$ values $i_k$. It follows that there exist $k_1, k_2$ such that $i_{k_2} - j_{k_2} = i_{k_1} - j_{k_1}$ and so $i_{k_2} - i_{k_1} = j_{k_2} - j_{k_1}$. We can assume that $i_{k_1} < i_{k_2}$. Then there is a move from $(i_{k_2}, j_{k_2})$ to $(i_{k_1}, j_{k_1})$ by removing $(i_{k_2} - i_{k_1})$ tokens from both piles. This is a contradiction as these two positions have the same Sprague-Grundy value $c$.

Hence, the sequence $\{\G_{\R}(n,a+n)\}_{n \geq 0}$ contains $c$ and so $\G_{\R}(b,a+b)\} = c$ for some $b$.

\end{proof}

%====================================================================================================================
\smallskip
\begin{remark} \label{BW-Remark1}
Landman \cite{landman} stated that Theorem \ref{BW-Row} is true for Wythoff's game. To our knowledge, a proof for this statement has not appeared in the literature. The proof for Theorem \ref{BW-Row} can also be applied to Wythoff's game and $\E$-Wythoff (see Theorem \ref{CW-Row}). Blass and Fraenkel \cite{blass} also showed that Theorem \ref{BW-Diagonal} holds for Wythoff's game.
\end{remark}

Next, we give an upper bound and a lower bound for the Sprague-Grundy function. The following two lemmas can be proved by induction on $a$.

%====================================================================================================================
\smallskip
\begin{lemma} \label{BW-R0}
$\G_{\R}(0,a) = a$.
\end{lemma}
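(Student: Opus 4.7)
The plan is a direct induction on $a$. The point is that at position $(0,a)$ with $a>0$, the two piles are unequal and the smaller one is empty; removing the same positive number of tokens from both piles is therefore impossible, and the rules of $\R$-Wythoff only allow removal from the larger pile. Hence the set of followers of $(0,a)$ is exactly $\{(0,i) : 0 \leq i \leq a-1\}$.

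For the base case $a=0$, the position $(0,0)$ has no followers, so $\G_{\R}(0,0) = mex\emptyset = 0$. For the inductive step, assume $\G_{\R}(0,i) = i$ for every $i < a$. Then
\[
\G_{\R}(0,a) \;=\; mex\{\G_{\R}(0,i) : 0 \leq i \leq a-1\} \;=\; mex\{0,1,\dots,a-1\} \;=\; a,
\]
completing the induction.

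There is essentially no obstacle: the only subtle point is the observation that the ``remove from both piles'' move is unavailable when one pile is empty, so the follower set at $(0,a)$ reduces to the one-dimensional Nim followers, whose Sprague-Grundy values are handled by the standard Nim induction.
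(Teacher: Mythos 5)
Your proof is correct and matches the paper's intended approach: the paper simply states that this lemma "can be proved by induction on $a$," and your argument — identifying the follower set of $(0,a)$ as $\{(0,i): 0 \leq i \leq a-1\}$ because the diagonal move is unavailable when one pile is empty, then applying the standard Nim induction — is exactly that induction carried out.
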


%====================================================================================================================
\smallskip
\begin{lemma} \label{BW-R12}
$\G_{\R}(1,a) = \G_{\R}(2,a) = a$ for $a \geq 3$.
\end{lemma}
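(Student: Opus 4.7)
The plan is to prove the two identities in sequence by induction on $a$, using Lemma \ref{BW-R0} and the first few Sprague-Grundy values as base data. The key observation is a complete inventory of followers in $\R$-Wythoff: from $(1,a)$ with $a\ge 3$ the only legal moves are to shrink the larger pile, producing the followers $(1,j)$ for $0\le j\le a-1$, or to remove one token from each pile, producing $(0,a-1)$; removing from the smaller pile alone is forbidden. Likewise, from $(2,a)$ with $a\ge 3$ the followers are $(2,j)$ for $0\le j\le a-1$ together with the two diagonal followers $(1,a-1)$ and $(0,a-2)$.

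For the first identity, I would record by direct computation that $\G_{\R}(1,0)=1$, $\G_{\R}(1,1)=2$, $\G_{\R}(1,2)=0$, and verify the base case $\G_{\R}(1,3)=3$. For $a\ge 4$ the inductive hypothesis gives $\G_{\R}(1,j)=j$ for $3\le j\le a-1$, and Lemma \ref{BW-R0} gives $\G_{\R}(0,a-1)=a-1$. The set of follower Sprague-Grundy values is therefore
\[
\{1,2,0\}\cup\{3,4,\ldots,a-1\}\cup\{a-1\}=\{0,1,\ldots,a-1\},
\]
whose $mex$ equals $a$, proving $\G_{\R}(1,a)=a$.

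For the second identity, the argument is parallel. I would first record $\G_{\R}(2,0)=2$, $\G_{\R}(2,1)=0$, $\G_{\R}(2,2)=1$ and check the base $\G_{\R}(2,3)=3$. For $a\ge 4$, the inductive hypothesis $\G_{\R}(2,j)=j$ for $3\le j\le a-1$, together with the now-established first identity $\G_{\R}(1,a-1)=a-1$ (valid since $a-1\ge 3$) and Lemma \ref{BW-R0}, gives follower values
\[
\{2,0,1\}\cup\{3,4,\ldots,a-1\}\cup\{a-1,a-2\}=\{0,1,\ldots,a-1\},
\]
so again $mex=a$.

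There is no serious obstacle; the only thing to be careful about is bookkeeping. One must check that the sporadic small values $\G_{\R}(1,j)$ and $\G_{\R}(2,j)$ for $j\le 2$ plug the gaps between $0$ and $\max\{a-1\}$ without omission (duplicates are harmless for $mex$), and one must verify that the restriction defining $\R$-Wythoff really does rule out any other follower. Both points are immediate from the explicit list of legal moves above.
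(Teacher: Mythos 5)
Your proof is correct, and it is exactly the induction on $a$ that the paper itself indicates (the paper omits the details, saying only that the lemma ``can be proved by induction on $a$''). The follower inventories, the base values, and the $mex$ computations all check out against Table \ref{T1}, so there is nothing to add.
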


%====================================================================================================================
\smallskip
\begin{lemma} \label{BW-R3}
For $a \geq 7$, we have
\begin{align*}
\G_{\R}(3,a) =
\begin{cases}
a,   &\text{ if $a \equiv 0,3 \pmod 4$};\\
a-4, &\text{ otherwise}.
\end{cases}
\end{align*}
\end{lemma}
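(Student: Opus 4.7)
The plan is to prove Lemma~\ref{BW-R3} by strong induction on $a$. I would first verify the four base cases $a=7,8,9,10$ directly; since these cover all residues modulo~$4$, the inductive step will be able to invoke the formula for every smaller $b\ge 7$. For each base case I would compute the mex of the followers of $(3,a)$ using Lemmas~\ref{BW-R0} and~\ref{BW-R12} for the equal-removal followers $(0,a-3),(1,a-2),(2,a-1)$, and Table~\ref{T1} (together with the already-verified small cases) for the ``larger-pile'' followers $(3,b)$ with $b\le a-1$.

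For the inductive step with $a\ge 11$, I would enumerate every follower of $(3,a)$. The legal moves are: the three equal-removal moves reaching $(0,a-3),(1,a-2),(2,a-1)$, whose $\G_\R$-values are $a-3, a-2, a-1$ respectively; and moves removing tokens from the larger pile, reaching $(3,b)$ for $0\le b\le a-1$. I would split the latter into $b\le 6$, where the seven values $\G_\R(3,b)$ form the set $\{0,1,2,3,4\}$, and $b\ge 7$, where the inductive hypothesis applies.

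The decisive combinatorial observation is that, on any complete block $b\in\{4k+3,4k+4,4k+5,4k+6\}$ with $k\ge 1$, the four inductively-known values $\G_\R(3,b)$ are $\{4k+3,\, 4k+4,\, 4k+1,\, 4k+2\}$, i.e.\ the four consecutive integers $\{4k+1,4k+2,4k+3,4k+4\}$. Hence the union of the values $\G_\R(3,b)$ taken over all complete blocks contained in $[7,a-1]$ is a contiguous interval of the form $[5,m]$ for an appropriate~$m$. A case analysis according to $a\bmod 4$ then pins down the contribution of the (possibly partial) final block and of the three values $\{a-3,a-2,a-1\}$: in the residues $a\equiv 0,3\pmod 4$ these two contributions, together with $\{0,1,2,3,4\}$, exhaust $[0,a-1]$, yielding $\mathrm{mex}=a$; in the residues $a\equiv 1,2\pmod 4$ exactly the integer $a-4$ is missing, yielding $\mathrm{mex}=a-4$.

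The main obstacle is the bookkeeping for the partial final block when $a\not\equiv 3\pmod 4$: one must verify that $a-4$ is genuinely omitted in the two bad residue classes and included in the other two. This reduces to checking that neither the inductively-computed partial-block values nor the three equal-move values $\{a-3,a-2,a-1\}$ accidentally hit $a-4$ when they should not, which is a short explicit computation in each of the four residue classes.
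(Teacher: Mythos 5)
Your proposal is correct and follows essentially the same route as the paper: induction on $a$ after verifying small cases, computing $\G_{\R}(3,a)$ as the mex of $\{0,1,2,3,4\}\cup\{\G_{\R}(3,i): 7\le i\le a-1\}\cup\{a-3,a-2,a-1\}$ via Lemmas \ref{BW-R0} and \ref{BW-R12}, and then checking which values in $\{5,\dots\}$ are covered and that $a-4$ is omitted exactly when $a\equiv 1,2 \pmod 4$. The only (cosmetic) difference is that you organize the coverage argument by blocks of four consecutive indices contributing four consecutive values, whereas the paper exhibits, for each target value $m$, an explicit index ($i=m$ or $i=m+4$) achieving it.
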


\begin{proof}
First, it can be checked that $\G_{\R}(3,0) = \G_{\R}(3,1) = \G_{\R}(3,2) = 3$, $\G_{\R}(3,3) = 4$,
$\G_{\R}(3,4) = 2$, $\G_{\R}(3,5) = 0$, $\G_{\R}(3,6) = 1$, $\G_{\R}(3,7) = 7$. These values and Lemma \ref{BW-R12} give
\begin{align} \label{BW-G3a}
\G_{\R}(3,a) = mex\{0,1,2,3,4,\G_{\R}(3,i),a-3,a-2,a-1) | 7 \leq i \leq a-1 \}
\end{align}
for $a \geq 8$.

We will prove the lemma by induction on $a$. Note that the lemma holds for $a = 7$. One also can check that the lemma holds for $a = 8,9$. Assume that the lemma holds for $7 \leq a \leq n$ for some $n \geq 9$. We show that the lemma holds for $a = n+1$. Throughout this proof, for each $x$, we denote by $r_x$ the remainder $x - 4\lfloor x/4 \rfloor$, where $\lfloor . \rfloor$ is the integer part.

Assume that $r_a \in \{0,3\}$. When $7 \leq i \leq a-1$, by the inductive hypothesis, either $\G_{\R}(3,i) = i < a$ or $\G_{\R}(3,i) = i-4 < a$. By (\ref{BW-G3a}), it is sufficient to show that
\[\D = \{5,6,\ldots, a-4\} \subseteq \{\G_{\R}(3,i) | 7 \leq i \leq a-1\} = \F.\]
Let $m \in \D$. If $r_m \in \{0,3\}$ then $7 \leq m \leq a-4$ and so, by the inductive hypothesis, $\G_{\R}(3,m) = m$ implying $m \in \F$. If $r_m \in \{1,2\}$ then $m+4 \neq a$ as $r_a \in \{0,3\}$, so $m+4 < a$. Moreover, $m+4 \geq 9$. By the inductive hypothesis, $\G_{\R}(3,m+4) = m$ implying $m \in \F$.

Assume that $r_a \in \{1,2\}$. By (\ref{BW-G3a}), it is sufficient to show that
\begin{align*}
\begin{cases}
F = \{5,6,\ldots, a-5\} \subseteq \F,\\
a-4 \notin \F.
\end{cases}
\end{align*}
Let $m' \in \F$. If $r_{m'} \in \{0,3\}$, since $7 \leq m' \leq a-5$, we have $\G_{\R}(3,m') = m'$ by the inductive hypothesis and so $m' \in \F$. If $r_{m'} \in \{1,2\}$ then $r_{m'+4} \in \{1,2\}$. Since $9 \leq m'+4 \leq a-1$, we have $\G_{\R}(3,m'+4) = m'$ by the inductive hypothesis and so $m' \in \B$. We now show that $a-4 \notin \F$. Assume by contradiction that $a-4 \in \F$. There exists $i$ with $7 \leq i \leq a-1$ such that $\G_{\R}(3,i) = a-4$. If $r_i \in \{1,2\}$ then $\G_{\R}(3,i) = i-4$ by the inductive hypothesis and so $i=a$ giving a contradiction. If $r_i \in \{0,3\}$ then $\G_{\R}(3,i) = i$ by the inductive hypothesis and so $i = a-4$ implying $i \equiv a \pmod 4$ giving a contradiction.
\end{proof}

It has been shown that in Wythoff's game, $b-2a+1 \leq \G_{\R}(a,b) \leq a+b$ \cite{blass, landman}. For $\R$-Wythoff, we have the same lower bound and a stricter upper bound.

%====================================================================================================================
\smallskip
\begin{theorem} \label{BW-lower}
Let $a, b$ be positive integers with $4 \leq a \leq b$. Then $\G_{\R}(a,b) \geq b-2a+1$.
\end{theorem}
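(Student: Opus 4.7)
The plan is to prove the following stronger statement, which immediately yields the theorem:
\[(\star_a) \quad \text{for every } a \geq 4 \text{ and } v \geq 0, \text{ there exists } b^* \leq v + 2a - 1 \text{ with } \G_{\R}(a, b^*) = v.\]
Indeed, given $(\star_a)$, for $4 \leq a \leq b$ with $b \geq 2a$ and any $v \in \{0, 1, \ldots, b - 2a\}$, $(\star_a)$ supplies $b^* \leq v + 2a - 1 \leq b - 1$ with $\G_{\R}(a, b^*) = v$, so $(a, b^*)$ is a row follower of $(a, b)$ with $\G$-value $v$. Hence $\{0, 1, \ldots, b - 2a\}$ is contained in the $mex$ set of $(a, b)$, yielding $\G_{\R}(a, b) \geq b - 2a + 1$. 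For $b < 2a$ the bound is nonpositive and therefore trivial.

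I will establish $(\star_a)$ by a double induction: outer on $a \geq 4$ and inner on $v \geq 0$. The inner base case $v = 0$ amounts to locating a $\P$-position in row $a$. By Lemma \ref{Comp}, either $a = \lfloor \phi n \rfloor$ for some $n$---and then $(a, a + n)$ is a $\P$-position with $b^* = a + n \leq 2a - 1$, since $n < (a+1)/\phi \leq a$ for $a \geq \phi$---or $a = \lfloor \phi n \rfloor + n$, and then $(a, a - n)$ is a $\P$-position with $b^* = a - n < a \leq 2a - 1$.

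For the inner inductive step ($v \geq 1$) I set $b^{**} := v + 2a - 1$ and analyze $u := \G_{\R}(a, b^{**}) = mex(U \cup D)$, where $U = \{\G_{\R}(a, b') : 0 \leq b' < b^{**}\}$ is the set of row-follower values and $D = \{\G_{\R}(a - j, b^{**} - j) : 1 \leq j \leq a\}$ is the set of diagonal-follower values. The inner inductive hypothesis yields $\{0, 1, \ldots, v - 1\} \subseteq U$: each $w < v$ is attained at some position $\leq w + 2a - 1 < b^{**}$. So $u \geq v$. If $u = v$, then $v$ is attained at position $b^{**}$; otherwise $u > v$ forces $v \in U \cup D$, and it remains to rule out $v \in D$, which will force $v \in U$ and hence place $v$ at some row position $< b^{**}$.

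The main obstacle, and the step invoking the outer induction on $a$, is showing $v \notin D$. For $1 \leq j \leq a - 4$ the follower $(a - j, b^{**} - j)$ has first coordinate $\geq 4$, and the outer inductive hypothesis yields
\[\G_{\R}(a - j, b^{**} - j) \geq (b^{**} - j) - 2(a - j) + 1 = v + j \geq v + 1 > v.\]
For $j \in \{a - 3, a - 2, a - 1, a\}$ the follower has first coordinate in $\{0, 1, 2, 3\}$, and Lemmas \ref{BW-R0}, \ref{BW-R12}, and \ref{BW-R3} compute its $\G$-value explicitly as one of $v + a - 1$, $v + a$, $v + a + 1$, $v + a + 2$, or $v + a - 2$, each strictly greater than $v$ for $a \geq 4$; the hypothesis $v + a + 2 \geq 7$ required by Lemma \ref{BW-R3} holds automatically since $v \geq 1$ and $a \geq 4$. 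Hence $v \notin D$, which closes the inner and outer inductions. In the outer base case $a = 4$ no index $j$ satisfies $a - j \geq 4$, so only the lemma-based arguments are needed.
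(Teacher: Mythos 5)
Your proof is correct, but it is organized quite differently from the paper's. The paper argues by contradiction with a single induction on $a+b$: assuming $\G_{\R}(a,b)=c<b-2a+1$, it examines the auxiliary position $(a,b-1)$, shows that $\G_{\R}(a,b-1)>c$ and that every diagonal follower of $(a,b-1)$ has value exceeding $c$ (rows $0,1,2$ via Lemmas \ref{BW-R0} and \ref{BW-R12}, row $3$ via Lemma \ref{BW-R3}, rows $\geq 4$ via the inductive hypothesis), and concludes that some row follower $(a,b-1-j)$ must carry the value $c$ --- contradicting $\G_{\R}(a,b)=c$, since that position is also a follower of $(a,b)$. You instead prove the stronger, constructive statement $(\star_a)$ that every value $v$ occurs in row $a$ at some column $b^*\leq v+2a-1$, via a double induction (outer on $a$, inner on $v$), and deduce the lower bound by counting which values must lie in the $mex$ set of $(a,b)$. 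The computational core is identical --- the same case split on the first coordinate of the diagonal followers and the same three lemmas --- but your packaging buys something extra: $(\star_a)$ sharpens Theorem \ref{BW-Row} for rows $a\geq 4$ by giving an explicit bound on the column at which each Sprague-Grundy value first appears, whereas the paper's Theorem \ref{BW-Row} only asserts existence via a pigeonhole argument. I checked the delicate points: the base case $v=0$ correctly locates the $\P$-position of row $a$ within column $2a-1$ (using $n<(a+1)/\phi\leq a$); the hypothesis $v+a+2\geq 7$ needed for Lemma \ref{BW-R3} does hold; and the identity $(b^{**}-j)-2(a-j)+1=v+j>v$ is right. The argument is sound.
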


\begin{proof}
We prove the theorem by induction on $a+b$. Note that the theorem holds for $8 \leq a+b \leq 11$ as $b-2a+1 \leq 0$. Assume that the theorem holds for $a+b \leq n$ for some $n \geq 11$, we show that the theorem holds for $a+b = n+1$. Assume by contradiction that $c = \G_{\R}(a,b) < b-2a+1$. Then $b \geq 2a$.  Consider $\G_{\R}(a,b-1)$. By the inductive hypothesis, we have
\[\G_{\R}(a,b-1) \geq b-1-2a+1 = b-2a \geq c.\]
Since there is a move from $(a,b)$ to $(a,b-1)$, $\G_{\R}(a,b-1) \neq c$ and so $\G_{\R}(a,b-1) > c$. We have \begin{align} \label{BW-G(a,b-1)}
\G&_{\R}(a,b-1) =  \notag \\
  &mex\{\G_{\R}(a-i,b-1-i), \G_{\R}(a,b-1-j) | 1 \leq i \leq a, 1\leq j \leq b-1\}.
\end{align}
We claim that the values $\G_{\R}(a-i,b-1-i)$, where $1 \leq i \leq a$, are all larger than $c$. Note that $b-1-i \geq b-1-a \geq a-1 \geq 3$. Assume that $a-i \leq 2$. By Lemmas \ref{BW-R0} and \ref{BW-R12}, we have
\[\G_{\R}(a-i,b-1-i) = b-1-i \geq b-1-a > b-2a \geq c.\]
Assume that $a - i = 3$. Then $b-1-i = b-a+2 \geq a+2 \geq 6$. If $b-1-i = 6$ then $b = a+4$ which implies that $b-2a \leq 0$ and so $c = 0 < 1 = \G_{\R}(3,6)$. If $b-1-i \geq 7$, by Lemma \ref{BW-R3},
\[\G_{\R}(a-i,b-1-i) \geq b-1-i-4 = b-a-2 > b-2a \geq c.\]
Assume that $a-i \geq 4$. Since $b-1-i \geq 2a-1-i \geq 8$, by the inductive hypothesis, we have
\[\G_{\R}(a-i,b-1-i) \geq b-1-i - 2(a-i)+1 = b-2a+i > b-2a \geq c.\]
Now, since $\G_{\R}(a,b-1) > c$ and $\G_{\R}(a-i,b-1-i) > c$ for all $i$, it follows from (\ref{BW-G(a,b-1)}) that there exists some $j$ such that $\G_{\R}(a,b-1-j) = c$. This is impossible since $\G_{\R}(a,b) = c$ and there exists a move from $(a,b)$ to $(a,b-1-j)$. Therefore, $\G_{\R}(a,b) \geq b-2a+1$. This completes the proof.
\end{proof}

%====================================================================================================================
\smallskip
\begin{theorem} \label{BW-upper1}
Let $a,b$ be positive integers with $2 \leq a \leq b$. Then $\G_{\R}(a,b) \leq a+b-1$.
\end{theorem}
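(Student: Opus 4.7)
The plan is to bound the set $S$ of Sprague-Grundy values of followers of $(a,b)$, and then apply the elementary inequality $\mathrm{mex}(S) \le |S|$. The key observation, valid for $a \ge 3$, is that the three distinct positions $(0,a)$, $(1,a)$, $(2,a)$ are all followers of $(a,b)$ sharing the common Sprague-Grundy value $a$, which forces $|S|$ strictly below the total number of distinct follower positions.

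First I would dispose of the case $a = 2$: when $b = 2$, $\G_{\R}(2,2) = 1 \le 3$ is read off directly (see Table~\ref{T1}); when $b \ge 3$, Lemma~\ref{BW-R12} gives $\G_{\R}(2,b) = b \le b+1 = a+b-1$. For the main case $a \ge 3$ (so $b \ge a \ge 3$), I note that $(0,a)$, $(1,a)$, $(2,a)$ are each followers of $(a,b)$, obtained by removing $b$, $b-1$, $b-2$ tokens respectively from the pile of size $b$ --- a legal move from the larger pile when $a < b$, or from either pile when $a = b$. By Lemma~\ref{BW-R0} and Lemma~\ref{BW-R12}, all three of these followers have Sprague-Grundy value $a$. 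The total number of distinct follower positions is at most $a+b$, namely the $b$ positions $(a,b')$ with $0 \le b' < b$ together with the $a$ positions $(a-j, b-j)$ with $1 \le j \le a$. Since the three distinct followers $(0,a), (1,a), (2,a)$ collapse to the single Sprague-Grundy value $a$, we get $|S| \le (a+b) - 2$, whence
\[
\G_{\R}(a,b) = \mathrm{mex}(S) \le |S| \le a + b - 2 \le a + b - 1.
\]

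I do not anticipate a serious obstacle: once the three-way collapse of follower Sprague-Grundy values is spotted, the rest is the trivial mex bound. The only bookkeeping worth noting is that $(0,a), (1,a), (2,a)$ are genuinely three distinct positions present among the followers regardless of whether $a < b$ or $a = b$, which is immediate from $b \ge 3$. Incidentally, the argument yields the slightly stronger bound $\G_{\R}(a,b) \le a + b - 2$ whenever $a \ge 3$, matching the exact values $\G_{\R}(2,b) = b$ from Lemma~\ref{BW-R12}, so the stated bound $a + b - 1$ is not tight.
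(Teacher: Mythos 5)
Your proof is correct, and it takes a genuinely different route from the paper's. The paper proceeds by induction on $a+b$: it shows that every follower $(a-i,b-i)$ or $(a,b-j)$ has Sprague-Grundy value strictly less than $a+b-1$, handling the cases where a coordinate drops to $0$, $1$, or $2$ via Lemmas \ref{BW-R0} and \ref{BW-R12} and the remaining cases via the inductive hypothesis, and then concludes by the definition of $mex$. You instead avoid any induction on the statement itself and use a counting argument: the position $(a,b)$ has at most $a+b$ distinct followers ($b$ of the form $(a,b')$ and $a$ of the form $(a-j,b-j)$), and for $a\geq 3$ the three distinct followers $(0,a),(1,a),(2,a)$ all carry the same value $a$ by Lemmas \ref{BW-R0} and \ref{BW-R12}, so the set $S$ of follower values has $|S|\leq a+b-2$ and $mex(S)\leq |S|$. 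Both arguments lean on the same two lemmas (which the paper leaves to the reader anyway), but yours is shorter, dispenses with the case analysis, and yields the strictly stronger bound $\G_{\R}(a,b)\leq a+b-2$ for $a\geq 3$ --- which is in fact attained, e.g.\ $\G_{\R}(3,3)=4$, so your sharpened bound is tight. The only points worth double-checking are exactly the ones you flagged: that $(0,a),(1,a),(2,a)$ are three distinct unordered pairs (true since $a\geq 3$) and that possible coincidences between the two move types only shrink the follower set, which can only help the estimate.
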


\begin{proof}
We argue by induction on $a+b$. First note that $\G_{\R}(2,2) = 1$ so the theorem holds for $a +b= 4$. Assume that the theorem holds for $a+ b \leq n$ for some $n \geq 4$. We show that the theorem holds for $a+b = n + 1$. We have
\begin{align*}
\G_{\R}(a,b) = mex \{\G_{\R}(a-i,b-i), \G_{\R}(a,b-j) | 1 \leq i \leq a, 1 \leq j \leq b\}.
\end{align*}
and so $\G_{\R}(a,b) \leq a+b-1$, provided we show that
\begin{align*}
\begin{cases}
\G_{\R}(a-i,b-i) < a+b-1, \\
\G_{\R}(a,b-j) < a+b-1.
\end{cases}
\end{align*}

Consider $\G_{\R}(a-i,b-i)$. If $a-i = 0$ then
\[\G_{\R}(a-i,b-i) = b-i < a+b-1.\]
If $a-i = 1$, as $\G_{\R}(1,1) = 2$, $\G_{\R}(1,2) = 0$ and $\G_{\R}(1,i) = i$ for $i \geq 3$ by Lemma \ref{BW-R12}, we have
\[\G_{\R}(a-i,b-i) < a+b-1.\]
If $a-i \geq 2$, by the inductive hypothesis, we have
\[\G_{\R}(a-i,b-i) \leq a-i + (b-i)-1 < a+b-1.\]

Consider $\G_{\R}(a,b-j)$.   If $b-j = 0$ then
\[\G_{\R}(a,b-j) = a < a+b-1.\]
If $b-j = 1$, as $\G_{\R}(2,1) = 0$ and $\G_{\R}(a,1)=a$ for $a \geq 3$ by Lemma \ref{BW-R12}, we have
\[\G_{\R}(a,b-j) < a+b-1.\]
If $b-j \geq 2$, by the inductive hypothesis, we have
\[\G_{\R}(a,b-j) \leq a+b-j-1 < a+b-1.\]
This completes the proof.
\end{proof}

%====================================================================================================================
\smallskip
We conclude this section by stating two conjectures.

\begin{conjecture} \label{BW-upper2}
Let $a, b$ be integers such that $4 \leq a \leq b$. Then
\begin{itemize}
\item $\G_{\R}(a,b) \leq b + \lfloor b/3 \rfloor - 1$ if $a < b$.
\item $\lfloor 3b/4 \rfloor \leq \G_{\R}(b,b) \leq b + \lfloor b/3 \rfloor$.
\end{itemize}
\end{conjecture}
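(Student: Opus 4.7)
The plan is a simultaneous strong induction on position, proving all three bounds together as a joint invariant and verifying the small base cases against Table \ref{T1} (supplemented by further computation). The two directions are inseparable: the diagonal upper bound $\G_{\R}(k,k) \le k + \lfloor k/3 \rfloor$ limits the Grundy values of the diagonal followers of $(b,b)$, and since Theorem \ref{BW-Diagonal} (applied with $a=0$) guarantees that $k \mapsto \G_{\R}(k,k)$ is a bijection of $\N \cup \{0\}$, it should force the small values $0, 1, \ldots, \lfloor 3b/4 \rfloor - 1$ to appear on the diagonal at some index $k < b$ and thereby appear as Grundy values of followers of $(b,b)$, yielding the lower bound. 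Concretely, if $\G_{\R}(k_c, k_c) = c$ with $k_c \ge b$ for some $c < \lfloor 3b/4 \rfloor$, one would want a contradiction from inductive control on the diagonal; the natural tool is the inductive lower bound at $k_c$, which forces the diagonal lower bound to be carried as part of the running invariant past every fixed $b$.

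For the upper bound $\G_{\R}(a,b) \le b + \lfloor b/3 \rfloor - 1$ (and its counterpart at $a = b$), the task is mex-style: exhibit at least one value $c \le b + \lfloor b/3 \rfloor - 1$ that is missing from the Grundy spectrum of followers. The followers of $(a,b)$ with $a < b$ are the row $\{(a,k) : 0 \le k < b\}$ together with the diagonal $\{(a-i, b-i) : 1 \le i \le a\}$. By Theorem \ref{BW-Diagonal} applied along difference $b - a$, the $a$ diagonal Grundy values are pairwise distinct; the inductive row bound, together with Lemmas \ref{BW-R0}, \ref{BW-R12} and \ref{BW-R3}, controls the row followers. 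The counting then reduces to: how many of the diagonal Grundy values fall in the critical window $[b + \lfloor b/3 \rfloor, a + b - 1]$? If at least $a - \lfloor b/3 \rfloor$ of them land in that window, then some value of $\{0, 1, \ldots, b + \lfloor b/3 \rfloor - 1\}$ is missing from the follower spectrum and the mex bound follows.

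The main obstacle is precisely controlling this diagonal contribution. Theorem \ref{BW-upper1} only gives $\G_{\R}(a-i, b-i) \le a + b - 2i - 1$, which a priori permits the $a$ distinct diagonal Grundy values to fill the critical window completely, breaking the pigeonhole step. Overcoming this seems to require a finer structural lemma for $\G_{\R}$ along constant-difference diagonals in $\R$-Wythoff, ideally a bound of the form \emph{at most $\lfloor b/3 \rfloor$ of the values $\G_{\R}(a-i, b-i)$ exceed $b$.} I would first collect empirical data substantially beyond Table \ref{T1} to identify the extremal positions where the conjectured bounds are approached, try to match them against a conjectural closed form along constant-difference diagonals (in the spirit of Lemma \ref{BW-R3}), and only then attempt the refined pigeonhole count that would close the induction.
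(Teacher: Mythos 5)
This statement is Conjecture~\ref{BW-upper2}: the paper offers no proof of it, so there is no argument of the authors' to compare yours against, and the only question is whether your sketch actually closes the conjecture. It does not, and you largely say so yourself. For the upper bound, the entire mex argument hinges on the ``finer structural lemma'' controlling how many of the $a$ distinct diagonal values $\G_{\R}(a-i,b-i)$ exceed $b$; you correctly identify that Theorem~\ref{BW-upper1} is too weak here, but you do not supply the lemma, so the step that would actually produce a missing value below $b+\lfloor b/3\rfloor$ is absent. Moreover, even granting your hypothesis that at least $a-\lfloor b/3\rfloor$ diagonal values land in the window $[\,b+\lfloor b/3\rfloor,\ a+b-1\,]$, the count does not close: the $b$ row followers contribute at most $b$ distinct values and the remaining diagonal followers at most $\lfloor b/3\rfloor$, which together could still exactly cover the $b+\lfloor b/3\rfloor$ values of $\{0,\dots,b+\lfloor b/3\rfloor-1\}$. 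You need either a strict surplus in the high window or a guaranteed collision among the row values, neither of which is established.

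The lower bound $\G_{\R}(b,b)\ge\lfloor 3b/4\rfloor$ has a more basic problem: the argument is circular. The failure mode is precisely $\G_{\R}(b,b)=c$ for some $c<\lfloor 3b/4\rfloor$, i.e.\ the unique index $k_c$ from Theorem~\ref{BW-Diagonal} equals $b$ itself; invoking ``the inductive lower bound at $k_c$'' when $k_c\ge b$ appeals to the very statement being proved at an index the induction has not yet reached, and carrying it ``as part of the running invariant past every fixed $b$'' does not resolve this. Note also that the constants leave no slack: even if both conjectured bounds held at all smaller indices, the upper bound only forces $k_c\ge\lceil 3c/4\rceil$ and the (assumed) lower bound only forces $k_c\lesssim 4(c+1)/3$, which for $c$ near $\lfloor 3b/4\rfloor$ fails to exclude $k_c=b$. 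In short, the proposal is a reasonable research plan --- the use of Theorem~\ref{BW-Diagonal} to get distinctness along constant-difference diagonals is the right kind of tool --- but both halves of the conjecture remain open under it.
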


Recall that a sequence $\{s_i\}_{i \geq i_0}$ is said to be \emph{additively periodic} if there exist $p \geq 1, n_0 \geq 0$ such that for all $n \geq n_0$, the condition $s_{n+p} = s_n + p$ holds. It is well known that in Wythoff's game, the sequence $\{\G(a,i)\}_{i\geq 0}$ is additively periodic for all $a$ \cite{Dress, landman}. We observe a similar behavior of Sprague-Grundy values of $\R$-Wythoff.

%====================================================================================================================
\smallskip
\begin{conjecture} \label{BW-additive}
Let $a$ be a nonnegative integer. The sequence $\{\G_{\R}(a,i)\}_{i\geq 0}$ is additively periodic.
\end{conjecture}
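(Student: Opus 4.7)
The plan is to proceed by strong induction on $a$. For the base cases $a \in \{0, 1, 2, 3\}$, Lemmas \ref{BW-R0}, \ref{BW-R12}, and \ref{BW-R3} already furnish explicit additively periodic descriptions of each row. For the inductive step, I would assume every row with index $a' < a$ is additively periodic, and let $P$ be the least common multiple of their individual periods, chosen so that the periodicity of each such row is already in effect from a common starting column. The goal then becomes showing that the shifted sequence $d(b) := \G_{\R}(a, b) - b$ is eventually periodic in $b$, which is equivalent to additive periodicity of row $a$.

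The main structural inputs will be the two-sided bound from Theorems \ref{BW-lower} and \ref{BW-upper1} together with the surjectivity from Theorem \ref{BW-Row}. The bounds confine $d(b)$ to the finite set $[-2a+1, a-1]$. Setting $\tau(v) = \min\{b : \G_{\R}(a, b) = v\}$, Theorem \ref{BW-Row} makes $\tau(v)$ finite for every $v$, and Theorem \ref{BW-lower} forces $\tau(v) \leq v + 2a - 1$. Hence for all sufficiently large $b$, every value in $[0, b - 2a + 1]$ has already appeared in row $a$ at some column $\leq b$ and therefore automatically lies in the mex set $M(b+1)$ defining $\G_{\R}(a, b+1)$. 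The mex is consequently controlled by the membership pattern of the short window $[b - 2a + 2, a + b]$, of length $3a - 1$. A value $v = b + c$ in this window (so $c \in [-2a+2, a]$) arises as a row value $\G_{\R}(a, k)$ with $k \leq b$ only for $k \in [b + c - a + 1, b + c + 2a - 1] \subseteq [b - 3a + 3, b]$, and arises as a diagonal value $\G_{\R}(a-i, b+1-i)$ for some $i \in [1, a]$ only through information encoded, by the inductive hypothesis, in $b \bmod P$.

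These observations suggest defining the state
\[
\sigma(b) = \bigl(d(b), d(b-1), \ldots, d(b - 3a + 2);\ b \bmod P\bigr),
\]
which lives in a finite set of cardinality at most $(3a-1)^{3a-1} \cdot P$. The mex analysis above shows that the transition $\sigma(b) \mapsto \sigma(b+1)$ is a deterministic function. A deterministic dynamical system on a finite set must be eventually periodic, so the state trajectory, and hence $\{d(b)\}$ itself, is eventually periodic, yielding additive periodicity of row $a$. The hard part will be making the boundary case analysis airtight: verifying that the window of width $3a - 2$ really captures every $k$ that can produce a row value in $[b - 2a + 2, a + b]$, checking the two endpoint values $v = b - 2a + 2$ and $v = a + b$ without an off-by-one slip, and handling the transition from the base cases cleanly given that Theorem \ref{BW-lower} is only proved for $a \geq 4$.
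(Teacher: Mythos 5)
The paper does not prove this statement: it is labelled and stated as a conjecture, with only the analogous fact for Wythoff's game attributed to \cite{Dress, landman}. So there is no proof of the author's to compare against; what you have written is a plan for settling an item the paper leaves open. That said, your plan is the standard finite-state-machine argument for additive periodicity, and as far as I can check it does go through here --- in fact more easily than for Wythoff's game itself, because in $\R$-Wythoff a position $(a,b)$ with $a<b$ has no followers in its column: the mex set for $\G_{\R}(a,b+1)$ consists only of the row values $\G_{\R}(a,m)$ with $m\leq b$ and the diagonal values $\G_{\R}(a-i,b+1-i)$ with $1\leq i\leq a$, so the column contribution that complicates the classical proof is simply absent. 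All the quantitative inputs you invoke are indeed available in the paper: Theorem \ref{BW-Row} together with Theorem \ref{BW-lower} gives that every value $v$ occurs in row $a$ at some column $\leq v+2a-1$ (if the first occurrence is at a column $<a$ this bound is automatic; otherwise apply the lower bound), hence every value in $[0,b-2a+1]$ already lies in the mex set; Theorems \ref{BW-lower} and \ref{BW-upper1} confine both the candidate mex values and the columns able to hit them to windows of width $O(a)$; and the inductive hypothesis reduces the diagonal contribution to a function of $b \bmod P$ once $b$ is large. Your state $\sigma(b)$ therefore evolves by a fixed deterministic map on a finite set for all large $b$, and eventual periodicity of $d(b)=\G_{\R}(a,b)-b$, equivalently additive periodicity of row $a$, follows.

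Two boundary points to nail down in a full write-up, neither of which threatens the argument. First, Theorem \ref{BW-lower} is stated only for $4\leq a\leq b$, so when bounding which columns $k$ can satisfy $\G_{\R}(a,k)=v$ you must treat the finitely many columns $k<a$ separately; this is harmless because for large $b$ those columns produce only values below your window, by $\G_{\R}(a,k)=\G_{\R}(k,a)\leq a+k$ (Lemmas \ref{BW-R0} and \ref{BW-R12} for $k\leq 2$, Theorem \ref{BW-upper1} for $2\leq k< a$). Second, at the boundary columns you only have $\G_{\R}(a,k)\leq a+k$ rather than $\leq a+k-1$, so the correct column window for a value $v$ is $[v-a,\,v+2a-1]$; since $v\geq b-2a+2$ this still lies inside $[b-3a+2,b]$, which your state of $3a-1$ stored entries already covers.
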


%====================================================================================================================
%====================================================================================================================
%====================================================================================================================
\medskip

\section{$\E$-Wythoff}
We first show that $\E$-Wythoff also preserves the $\P$-positions of Wythoff's game. We then give  formulas for those positions which have Sprague-Grundy value 1. We investigate the Sprague-Grundy function $\G_{\E}$ before giving some conjectures and a question at the end of the section.

%==================================================================================
\smallskip
\begin{theorem} \label{CW-P}
The $\P$-positions of $\E$-Wythoff are identical to those of Wythoff's game.
\end{theorem}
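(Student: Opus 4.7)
My plan is to imitate the structure of the proof of Theorem \ref{BW-P}, verifying the two standard conditions:
\begin{itemize}
\item[(i)] no move from a position in $\A = \{(\lfloor\phi n\rfloor, \lfloor\phi n\rfloor + n) \mid n \geq 0\}$ lands in $\A$;
\item[(ii)] from every position not in $\A$, some move lands in $\A$.
\end{itemize}
Condition (ii) is essentially free: every move of Wythoff's game is still a legal move in $\E$-Wythoff, and we already know (ii) holds for Wythoff's game. So I only have to handle (i).

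For (i), the moves inherited from Wythoff's game do not connect two $\A$-positions by the classical theorem of Wythoff, so the real content is to show that the new moves also cannot map $\A$ into $\A$. Suppose for contradiction that we are at $(a,b) = (\lfloor\phi n\rfloor, \lfloor\phi n\rfloor + n) \in \A$ and apply the new move removing $k \geq 1$ tokens from the smaller pile and $l$ tokens from the larger pile with $l < k$. The case $n = 0$ is impossible since no move is available from $(0,0)$, so assume $n \geq 1$, giving $a < b$. After the move, the smaller pile is $a - k$ (note that $l < k$ together with $a \leq b$ ensures $a-k < b-l$, so the ordering of the piles is preserved), and the difference becomes
\[
(b - l) - (a - k) = n + (k - l) \geq n + 1.
\]
If the resulting position were in $\A$, say $(\lfloor\phi m\rfloor, \lfloor\phi m\rfloor + m)$, then $m = n + (k - l) > n$, and hence the new smaller entry would satisfy
\[
\lfloor\phi m\rfloor > \lfloor\phi n\rfloor = a > a - k,
\]
contradicting the requirement that $a - k = \lfloor\phi m\rfloor$.

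Thus the new moves strictly increase the difference of the piles while strictly decreasing the smaller pile, whereas in $\A$ a larger difference is always paired with a strictly larger smaller entry; this incompatibility proves (i). The only subtle point — and the one to watch during write-up — is checking that the ordering of the two entries is maintained after applying the new move, so that the phrase ``smaller pile'' refers to the same entry before and after; this is exactly why the strict inequality $l < k$ in the definition of $\E$-Wythoff is used. I do not expect any other obstacle, since once (i) is established the result follows immediately from the Wythoff solution via the inclusion of Wythoff moves into $\E$-Wythoff.
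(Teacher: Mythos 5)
Your proposal is correct and follows essentially the same route as the paper: the heart of both arguments is the observation that the adjoined move strictly increases the difference of the piles (forcing $m>n$) while strictly decreasing the smaller entry, which is incompatible with $a-k=\lfloor\phi m\rfloor>\lfloor\phi n\rfloor$. The paper packages the reduction to condition (i) by citing the general fact that an adjoined move preserves the $\P$-positions iff it never connects two $\P$-positions of Wythoff's game, whereas you verify conditions (i) and (ii) directly; this is only a cosmetic difference.
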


\begin{proof}
Recall that a move  can be added to the set of moves of Wythoff's game without changing the $\P$-positions if and only if that move cannot lead a $\P$-position to another $\P$-position in Wythoff's game \cite{blass}. Therefore, it suffices to show that for $l < k$, the move $\mathcal{M}$ removing $k$ tokens from the smaller pile and $l$ tokens from the other pile cannot lead a position of the form $(\lfloor \phi n \rfloor, \lfloor \phi n \rfloor +n)$ to another position of the form $(\lfloor \phi m \rfloor, \lfloor \phi m \rfloor +m)$. Assume by contradiction that there exist such positions $(\lfloor \phi n \rfloor, \lfloor \phi n \rfloor +n)$, $(\lfloor \phi m \rfloor, \lfloor \phi m \rfloor +m)$. We have then
\begin{align*}
\begin{cases}
\lfloor \phi n \rfloor - k = \lfloor \phi m \rfloor,\\
\lfloor \phi n \rfloor +n - l = \lfloor \phi m \rfloor +m.
\end{cases}
\end{align*}
The first equation implies that $n > m$. Replacing $\lfloor \phi n \rfloor$ by $\lfloor \phi m \rfloor +k$ in the second equation obtains $n+k-l=m$ implying $n \leq m$ giving a contradiction. Therefore, the move $\mathcal{M}$ cannot lead a $\P$-position to another $\P$-position in Wythoff's game.
\end{proof}

%==================================================================================

\begin{table}[ht]
\begin{center}
\begin{tabular}{c|cccccccccc}
9     &9 &10&11&12&2   &1  &15  &16 &17  &18 \\
8     &8 &6 &9  &10 &11 &13&14  &15 &16  &17 \\
7     &7 &8 &4  &2  &0   &12 &13 &14  &15  &16 \\
6     &6 &7 &8  &1  &10 &11 &12 &13  &14  &15 \\
5     &5 &3 &6  &0  &9   &10 &11 &12  &13  &1 \\
4     &4 &5 &1  &7  &8   &9  &10  &0   &11   &2\\
3     &3 &4 &5  &6  &7   &0  &1   &2    &10   &12 \\
2     &2 &0 &3  &5  &1   &6  &8   &4    &9     &11 \\
1     &1 &2 &0  &4  &5   &3  &7   &8    &6     &10 \\
0     &0 &1 &2  &3  &4   &5  &6   &7    &8     &9 \\
\hline
a/b &0&1&2&3&4&5&6&7&8&9
 \end{tabular}
\caption{Sprague-Grundy values $\G_{\E}(a,b)$ for $a,b\leq 9$}\label{T2}
\end{center}
\end{table}

Table \ref{T2} gives the  Sprague-Grundy values of position $(a,b)$ for $a,b\leq 9$. We now give formulas for those positions which have Sprague-Grundy value 1.

%===================================================================================
\smallskip
\begin{theorem} \label{CW-V1}
In $\E$-Wythoff, the position $(a,b)$ with $a \leq b$ has Sprague-Grundy value 1 if and only if $(a,b)$ is of the form
\[(\lfloor \phi n \rfloor - 1, \lfloor \phi n \rfloor +n-1)\]
for some $n \geq 1$.
\end{theorem}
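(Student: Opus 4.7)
The plan is to mimic the three-step structure used in the proof of Theorem~\ref{BW-V1}. Denote the target set by $\C = \{(\lfloor \phi n \rfloor - 1, \lfloor \phi n \rfloor + n - 1) \mid n \geq 1\}$, which is exactly the set introduced in Lemma~\ref{A-B1}. By the mex definition of the Sprague--Grundy function, it suffices to establish: (i)~$\C \cap \P = \emptyset$; (ii)~no move of $\E$-Wythoff from a position in $\C$ lands in $\C$; and (iii)~from every position $(a,b) \notin \C \cup \P$ with $a \leq b$ there is a move to $\C$. Part~(i) is immediate from Lemma~\ref{A-B}.

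For Part~(ii), fix $(a,b) = (\lfloor \phi n \rfloor - 1, \lfloor \phi n \rfloor + n - 1) \in \C$ and parameterize each move by the numbers $k$ and $l$ of tokens removed from the smaller and larger pile respectively. The legal moves of $\E$-Wythoff split into two regimes: $k = 0$ with $l \geq 1$, or $k \geq 1$ with $k \geq l \geq 0$ (this second condition captures removal from the smaller pile alone, removal of equal amounts from both piles, and the new move, whose defining constraint $l < k$ forces $k \geq l$). In the first regime, if $b - l \geq a$ the image is $(a, b - l)$ and Lemma~\ref{A-B1}(i) rules out membership in $\C$; if $b - l < a$ the image reorders to $(b - l, a)$, and belonging to $\C$ would force $\lfloor \phi n \rfloor = \lfloor \phi m \rfloor + m$ for some $m$, contradicting Lemma~\ref{Comp}. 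In the second regime $a - k \leq b - l$ (because $k \geq l$ and $a \leq b$), so Lemma~\ref{A-B1}(ii) applies directly.

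For Part~(iii), first dispose of $a = 0$: then $b \geq 2$ (since $(0,0) \in \P$ and $(0,1) \in \C$), and removing $b - 1$ tokens from the larger pile reaches $(0, 1) \in \C$. For $a \geq 1$ apply Lemma~\ref{Comp} to $a + 1$, producing two cases. In Case~A, $a + 1 = \lfloor \phi n \rfloor$ for some $n \geq 2$, so $a \geq 2$ and the natural target is $(a, a + n) \in \C$: if $b > a + n$ remove the excess from the larger pile; if $b = a + n$ the position is excluded by hypothesis; if $a < b < a + n$, set $m = b - a \in \{1, \ldots, n - 1\}$ and remove $i = a - \lfloor \phi m \rfloor + 1$ from both piles (valid since $\lfloor \phi m \rfloor \leq \lfloor \phi(n-1) \rfloor < \lfloor \phi n \rfloor$), landing at the $\C$-element with parameter $m$; if $b = a$, apply the new move with $k = a$ and $l = a - 1$ to reach $(0, 1) \in \C$. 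In Case~B, $a + 1 = \lfloor \phi n \rfloor + n$ for some $n \geq 1$, so $n \leq a$ and removing $b - a + n$ tokens from the larger pile (legitimate since this quantity lies in $\{1, \ldots, b\}$) reaches $(a, a - n) = (\lfloor \phi n \rfloor - 1, \lfloor \phi n \rfloor + n - 1) \in \C$.

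The main obstacle is Part~(ii), where the new move must be merged with the pre-existing Wythoff moves without duplicating work. The key observation is that its defining inequality $l < k$ places it inside the hypothesis $k \geq l \geq 0$ of Lemma~\ref{A-B1}(ii), so a single appeal to that lemma handles the new move together with the smaller-pile and equal-from-both moves. With this in hand, the remaining work in Part~(iii) is bounded case analysis, whose only mild subtlety is the $b = a$ branch of Case~A where only the new move (and not a pure Wythoff move) reaches $\C$.
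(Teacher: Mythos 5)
Your proposal is correct and follows essentially the same route as the paper: the same three conditions (disjointness from $\P$ via Lemma \ref{A-B}, closure under moves via Lemma \ref{A-B1}, reachability via Lemma \ref{Comp} applied to $a+1$), with only cosmetic differences in how the $a=b$ and $a=0$ cases are dispatched. Your explicit check that every $\E$-Wythoff move falls under either part (i) or part (ii) of Lemma \ref{A-B1} is a correct (and slightly more careful) spelling-out of what the paper states in one line.
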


\begin{proof}
Let
\[\Q = \{(\lfloor \phi n \rfloor - 1, \lfloor \phi n \rfloor +n-1) | n \geq 1\}.\]
By Theorem \ref{CW-P}, the set of $\P$-positions of the game is
\[\P = \{(\lfloor \phi m \rfloor, \lfloor \phi m \rfloor + m)| m \geq 0\}.\]
We need to prove that
\begin{itemize}
\item [(i)] $\Q \cap \P = \emptyset$,
\item [(ii)] There is no move from a position in $\Q$ to a position in $\Q$,
\item [(iii)] From every position not in $M\cup \P$, there is one move to some position in $\Q$.
\end{itemize}

Note that (i) follows Lemma \ref{A-B} and (ii) follows Lemma \ref{A-B1}. For (iii), let $p = (a,b) \notin \Q \cup \P$ with $a \leq b$. We can assume that $a < b$ since if otherwise, one can move from $p$ to $(0,1) \in \Q$ by removing $a$ tokens from one pile and $a-1$ tokens from the other pile. By Lemma \ref{Comp}, there exists $n$ such that either $a = \lfloor \phi n \rfloor - 1$ or $a = \lfloor \phi n \rfloor + n - 1$. If the former case occurs, we have $b \neq \lfloor \phi n \rfloor + n - 1$. If $b = \lfloor \phi n \rfloor + n - 1 + i$, removing $i$ tokens from the pile of size $b$ leads $p$ to $(\lfloor \phi n \rfloor - 1, \lfloor \phi n \rfloor + n - 1) \in \Q$. If $b = \lfloor \phi n \rfloor + n - 1-i$, then $m = n-i > 0$ as $b > a$. One can remove $\lfloor \phi n \rfloor - \lfloor \phi m \rfloor$ tokens from both piles leading $p$ to $(\lfloor \phi m \rfloor - 1, \lfloor \phi m \rfloor +m-1) \in \Q$. If the latter case occurs, since $b > a$, $b = \lfloor \phi n \rfloor + n - 1 +j$ for some $j > 0$. Removing $n+j$ tokens from the pile of size $b$ leads $p$ to $(\lfloor \phi n \rfloor - 1, \lfloor \phi n \rfloor + n - 1) \in \Q$.
\end{proof}

Theorem \ref{CW-V1} raises two interesting things. Firstly, there is a nice connection between these positions and the $\P$-positions. A position $(a,b)$ has Sprague-Grundy value 1 if and only if $(a+1,b+1)$ is a $\P$-position. Secondly, the sets of positions which have Sprague-Grundy value 1 in $\R$-Wythoff and in $\E$-Wythoff have only three different positions $(2,2), (2,4), (4,6)$. Is this a fluke? The reasons for this similarity need further investigation (see Question \ref{Q1}).

%====================================================================================
\smallskip
\begin{theorem} \label{CW-Row}\
Let $a,c$ be nonnegative integers. There exists a unique $b$ such that $\G_{\E}(a,b) = c$.
\end{theorem}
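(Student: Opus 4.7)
The plan is to mirror the proof of Theorem \ref{BW-Row}, enlarging the pigeonhole argument to accommodate the extra moves of $\E$-Wythoff. First I would dispose of uniqueness: if $\G_{\E}(a,b_1) = \G_{\E}(a,b_2) = c$ with $b_1 < b_2$, then from $(a,b_2)$ one can remove $b_2 - b_1$ tokens from the second pile to reach $(a,b_1)$, so these Grundy values must differ. In particular, each row contains each value at most once.

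For existence, the trivial cases $a = 0$ and $c = 0$ are handled exactly as in Theorem \ref{BW-Row} via Lemma \ref{Comp}. Suppose for contradiction that $c \geq 1$ is the smallest value missing from the row sequence $R_a = \{\G_{\E}(a,n)\}_{n \geq 0}$. Choose the smallest $b_0$ with $\{0,1,\ldots,c-1\} \subseteq \{\G_{\E}(a,i) \mid i \leq b_0-1\}$, and set $b_s = b_0 + s(a+1)$ for each $s \geq 1$. Since $c \notin R_a$, we have $\G_{\E}(a,b_s - i) \neq c$ for all $i$, while the mex defining $\G_{\E}(a,b_s)$ contains $\{0,\ldots,c-1\}$ but cannot contain $c$, forcing $\G_{\E}(a,b_s) > c$. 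Hence some non-row follower of $(a,b_s)$ must have Grundy value $c$.

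The non-row followers of $(a,b_s)$ in $\E$-Wythoff are precisely the positions $(a-j, b_s)$ with $1 \leq j \leq a$, the positions $(a-j, b_s-j)$ with $1 \leq j \leq a$, and the positions $(a-k, b_s-l)$ with $1 \leq l < k \leq a$ coming from the adjoined move. I would encode each such follower by its signature $(\delta a, \delta b)$; the set of all allowed signatures has cardinality at most $2a + \binom{a}{2}$ and, crucially, is independent of $s$. Since each $s$ produces at least one witnessing signature, pigeonhole yields $s_1 < s_2$ and a common signature $(\delta a, \delta b)$ with
\[\G_{\E}(a - \delta a, b_{s_1} - \delta b) = \G_{\E}(a - \delta a, b_{s_2} - \delta b) = c.\]
These are distinct positions (as $b_{s_2} - b_{s_1} = (s_2 - s_1)(a+1) > 0$) lying in the same row $a - \delta a$, contradicting the uniqueness already established for that row.

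The only point worth flagging is the bookkeeping of signatures: one must check that the adjoined moves, although more numerous, still give a finite signature set independent of $s$, so that the pigeonhole is applicable. Once this is granted, the argument is a straightforward enhancement of Theorem \ref{BW-Row}, with the enriched move list lengthening the signature set but leaving the combinatorial heart of the proof unchanged.
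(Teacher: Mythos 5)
Your proposal is correct and takes essentially the same approach as the paper: the paper's proof of Theorem~\ref{CW-Row} consists precisely of the one-line uniqueness observation (remove $b_2-b_1$ tokens from the second pile) together with the remark that existence follows by the argument of Theorem~\ref{BW-Row}. Your only addition is to make explicit that the enlarged set of move signatures $(\delta a,\delta b)$ for the adjoined moves is still finite and independent of $s$, so the pigeonhole step goes through unchanged --- which is exactly the point the paper leaves to the reader.
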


\begin{proof}
The uniqueness holds as one can move from $(a,b+i)$ to $(a,b)$. The existence is established by the same argument used in the proof of Theorem \ref{BW-Row}.
\end{proof}

%====================================================================================================================
We now give the lower and upper bounds for the Sprague-Grundy function of $\E$-Wythoff. We first recall a simple result in Wythoff's game.

%=======================================================================
\smallskip
\begin{lemma} \cite{blass} \label{CW-R1}
In Wythoff's game, for $a \geq 0$, we have
\begin{align*}
\G(1,a) =
\begin{cases}
a+1, &\text{if $a \equiv 0,1 \pmod 3$};\\
a-2, &\text{otherwise}.
\end{cases}
\end{align*}
\end{lemma}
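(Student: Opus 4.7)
The plan is to prove the formula by strong induction on $a$. From position $(1,a)$ the legal moves in Wythoff's game are: (a) remove the single token from the short pile, yielding $(0,a)$ with value $a$; (b) remove $j$ tokens from the large pile for $1\le j\le a$, yielding $(1,a-j)$; and (c) remove one token from each pile, which is the only diagonal move available since the short pile has size $1$, yielding $(0,a-1)$ with value $a-1$. Hence
\[
\G(1,a) = mex\{\G(1,j),\, a-1,\, a \,|\, 0\le j\le a-1\}.
\]

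The key observation driving the induction is that the hypothesis forces a clean periodic structure: for every $k\ge 0$ the three consecutive values $\G(1,3k),\,\G(1,3k+1),\,\G(1,3k+2)$ are $3k+1,\,3k+2,\,3k$, i.e.\ a cyclic permutation of the block $\{3k,3k+1,3k+2\}$. Consequently, whenever $\{0,1,\dots,a-1\}$ contains a complete length-$3$ block of indices, the set $\{\G(1,j):0\le j\le a-1\}$ contains every integer in that block.

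After verifying the base cases $a=0,1,2$ by direct computation (which give $\G(1,0)=1$, $\G(1,1)=2$, $\G(1,2)=0$, matching the formula), I would carry out the inductive step by splitting into three cases according to $a\bmod 3$. If $a=3k$, the range $\{0,\dots,a-1\}$ is a disjoint union of $k$ complete blocks, so $\{\G(1,j):0\le j\le a-1\}=\{0,\dots,a-1\}$; adjoining the values $a-1$ and $a$ from moves (c) and (a) gives the follower set $\{0,\dots,a\}$, whose $mex$ is $a+1$. If $a=3k+1$, the extra index $j=3k$ contributes $3k+1=a$, producing $\{0,\dots,a-2\}\cup\{a\}$; the missing value $a-1$ is supplied by move (c), so once again the $mex$ is $a+1$. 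If $a=3k+2$, the extra indices $j=3k,3k+1$ contribute $\{3k+1,3k+2\}=\{a-1,a\}$, producing $\{0,\dots,a-3\}\cup\{a-1,a\}$; moves (a) and (c) merely duplicate $a$ and $a-1$, so exactly $a-2$ is missing from the follower set and the $mex$ equals $a-2$.

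The proof is essentially a bookkeeping exercise with no conceptual surprise; the only potential obstacle is handling the three residue classes carefully and recognizing that the cyclic permutation within each block of three is precisely what prevents gaps in the follower set below the claimed $mex$.
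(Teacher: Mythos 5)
Your proof is correct. Note that the paper does not actually prove this lemma --- it is quoted from Blass and Fraenkel \cite{blass} without proof --- so there is no in-paper argument to compare against; your induction on $a$, using the follower set $\{\G(1,j) : 0 \le j \le a-1\} \cup \{a-1, a\}$ and the observation that each complete block $\{3k,3k+1,3k+2\}$ of indices contributes exactly the same block of values, is a sound and self-contained verification, and the three residue-class cases are handled correctly (I checked the small cases $a\le 5$ against the formula as well).
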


%============================================================================
\smallskip
\begin{remark} \label{CW-Remark1}
Lemma \ref{CW-R1} is also true for the Sprague-Grundy function of $\E$-Wythoff as both Wythoff's game and $\E$-Wythoff allow the same moves from the position $(1, a)$ and all positions reached from $(1, a)$.
\end{remark}

%========================================================================
\smallskip
\begin{lemma} \label{CW-R2}
In $\E$-Wythoff, for $a \geq 0, a \neq 1$, we have
\begin{align*}
\G_{\E}(2,a) =
\begin{cases}
a+2, &\text{if $a \equiv 0 \pmod 3$};\\
a-3, &\text{if $a \equiv 1 \pmod 3$};\\
a+1, &\text{if $a \equiv 2 \pmod 3$}.
\end{cases}
\end{align*}
\end{lemma}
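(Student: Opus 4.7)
The proof will proceed by induction on $a$, treating the three residue classes $a \bmod 3$ separately. First I would verify the formula directly for the small base cases $a \in \{0, 2, 3, 4\}$, and also compute the auxiliary value $\G_{\E}(2,1) = 0$, which will be needed in the inductive step but lies outside the stated formula.

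For the inductive step with $a \geq 5$, I would enumerate the followers of $(2,a)$ in $\E$-Wythoff. Since the smaller pile has two tokens, these are exactly the positions $(2,j)$ for $0 \leq j \leq a-1$ (Nim moves on the larger pile), the positions $(1,a)$ and $(0,a)$ (Nim moves on the smaller pile), the positions $(1,a-1)$ and $(0,a-2)$ (equal-removal Wythoff moves), and the single genuinely new position $(0,a-1)$ arising from the $\E$-move with $k=2$ and $l=1$. Using the inductive hypothesis for $\G_{\E}(2,j)$ with $j<a$ (supplemented by $\G_{\E}(2,1)=0$), together with Lemma \ref{CW-R1} and Remark \ref{CW-Remark1} for $\G_{\E}(1,\cdot)$ and the identity $\G_{\E}(0,b)=b$, the Grundy values of all followers are known.

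Writing $a = 3M + r$ with $r \in \{0,1,2\}$, I would then show that by the induction hypothesis the set $\{\G_{\E}(2,j) : 0 \leq j \leq a-1\}$ equals an explicit initial segment of the nonnegative integers minus a short list of ``holes'' (for example, when $r=0$ the only missing value is $3M-2$, while for $r=1$ the two missing values are $3M-2$ and $3M+1$, and for $r=2$ the set is already an initial segment). Adjoining the values $a$, $a-1$, $a-2$, $\G_{\E}(1,a)$ and $\G_{\E}(1,a-1)$ contributed by the remaining followers --- values which themselves split into cases according to $a\bmod 3$ --- plugs all of these holes except exactly one, and this surviving hole coincides with the claimed value of $\G_{\E}(2,a)$, whence the mex computation gives the stated formula.

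The main obstacle will be the bookkeeping across the three residue cases. Because $\G_{\E}(2,1) = 0$ does not fit the formula pattern, and because $\G_{\E}(1,a-1)$ depends on $(a-1) \bmod 3$ rather than on $a \bmod 3$, the collection of follower Grundy values shifts in a slightly different way in each residue class, and one must verify in each case separately that the combined set is exactly $\{0,1,\ldots,N\} \setminus \{\G_{\E}(2,a)\}$ for an appropriate upper bound $N$. Once this matching is carried out cleanly, each of the three cases reduces to an elementary arithmetic verification of which residues modulo $3$ are covered and which are not.
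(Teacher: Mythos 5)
Your proposal is correct and follows essentially the same route as the paper: induction on $a$, the same enumeration of followers giving the mex set $\{a,a-1,a-2,\G_{\E}(1,a),\G_{\E}(1,a-1)\}\cup\{\G_{\E}(2,i)\mid i\le a-1\}$, and a case split on $a\bmod 3$ using Lemma \ref{CW-R1} via Remark \ref{CW-Remark1}. The only slip is in your parenthetical example for $r=2$, where $\{\G_{\E}(2,j)\mid j\le a-1\}$ equals $\{0,\ldots,3M\}\cup\{3M+2\}$ rather than an initial segment (the hole at $3M+1=a-1$ is filled by the follower $(0,a-1)$); this does not affect the validity of the outlined argument.
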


\begin{proof}
We prove the lemma by induction on $a$. Throughout this proof, we denote by $\equiv $ the congruence modulo 3. Calculations show that the lemma holds for $a \leq 8$. Assume that the lemma holds for $a \leq n$ for some $n \geq 8$, we show that the lemma holds for $a = n+1$. We have
\begin{align} \label{CW-G2a}
\G_{\E}&(2,a) \notag \\
       &= mex\{\G_{\E}(2-i,a-j), \G_{\E}(2,a-k) | 0 \leq j \leq i \leq 2, i \geq 1, 1 \leq k \leq a\}  \notag \\
       &= mex \{a,a-1,a-2,\G_{\E}(1,a), \G_{\E}(1,a-1), \G_{\E}(2,i) | i \leq a-1\}.
\end{align}
Set $S = \{\G_{\E}(2,i) | i \leq a-1\}$.

If $a \equiv  0$, by Remark \ref{CW-Remark1}, $\G_{\E}(1,a) = a+1$, $\G_{\E}(1,a-1)=a-3$ and so (\ref{CW-G2a}) becomes
\[\G_{\E}(2,a) = mex \{a+1,a,a-1,a-2,a-3, \G_{\E}(2,i) | i \leq a-1\}.\]
It remains to show that $\{0, 1, \ldots, a-4\} \subseteq S$ and $a+2 \notin S$. For the first condition, note that $0 = \G_{\E}(2,1) \in S$. Let $m \in \{1,2, \ldots, a-4\}$. Then $m+3 \leq a-1$. If $m \equiv  0$ then $m-1 \equiv  2$. By the inductive hypothesis, we have $\G_{\E}(2,m-1) = m$ and so $m \in S$. If $m \equiv  1$ then $m+3 \equiv  1$. By the inductive hypothesis, we have $\G_{\E}(2,m+3) = m$ and so $m \in S$. If $m \equiv  2$ then $m-2 \equiv  0$. By the inductive hypothesis, we have $\G_{\E}(2,m-2) = m$ and so $m \in S$. For the second condition, note that $\G_{\E}(2,0) = 2$, $\G_{\E}(2,1) = 0$. Moreover, for $3 \leq i \leq a-1$, by the inductive hypothesis, we have $\G_{\E}(2,i) \leq i+2 \leq a+1$. Therefore, $\G_{\E}(2,i) < a+2$ for all $i \leq a-1 $ and so $a+2 \notin S$.

If $a \equiv  1$, by Remark \ref{CW-Remark1}, $\G_{\E}(1,a) = a+1$, $\G_{\E}(1,a-1)=a$ and so (\ref{CW-G2a}) becomes
\[\G_{\E}(2,a) = mex \{a+1,a,a-1,a-2, \G_{\E}(2,i) | i \leq a-1\}.\]
It remains to show that $\{0, 1, \ldots, a-4\} \subseteq S$ and $a-3 \notin S$. For the first condition, let $a' = a-1$. Then $a' \equiv  0$. The case $a \equiv  0$ above gives
\[\{0,1, \ldots, a-5\} = \{0,1, \ldots, a'-4\} \subseteq \{\G_{\E}(2,i) | i \leq a'-1\} \subseteq S.\]
We now need to show that $a-4 \in S$. Note that $a-5 \equiv  2$. By the inductive hypothesis, we have $\G_{\E}(2,a-5) = a-4$ and so $a-4 \in S$. For the second condition, assume by contradiction that $a-3 \in S$. Then, there exists $i \leq a-1$ such that $\G_{\E}(2,i) = a-3$. If $i \equiv  0$ then $\G_{\E}(2,i) = i+2$ by the inductive hypothesis. We then have $i+2 = a-3$ and so $a \equiv  i+2 \equiv  2$ giving a contradiction. If $i \equiv  1$, then $i \neq 1$ as $\G_{\E}(2,1) = 0 < a-3$. By the inductive hypothesis, we have $\G_{\E}(2,i) = i-3$ which implies $i-3 = a-3$ and so $i = a$ giving a contradiction. If $i \equiv  2$ then $\G_{\E}(2,i) = i+1$ by the inductive hypothesis. It follows that $i+1 = a-3$ and so $a \equiv  i+1 \equiv  0$ giving a contradiction. Thus, the second condition holds.

If $a \equiv  2$, Remark \ref{CW-Remark1} gives $\G_{\E}(1,a) = a-2$, $\G_{\E}(1,a-1)=a$ and so (\ref{CW-G2a}) becomes
\[\G_{\E}(2,a) = mex \{a,a-1,a-2, \G_{\E}(2,i) | i \leq a-1\}.\]
It remains to show that $\{0, 1, \ldots, a-3\} \subseteq S$ and $a+1 \notin S$. For the first condition, let $a'' = a-1$. Then $a'' \equiv  1$. From the case $a \equiv  1$ above, we have
\[\{0,1, \ldots, a-5\} = \{0,1, \ldots, a''-4\} \subseteq \{\G_{\E}(2,i) | i \leq a''-1\} \subseteq S.\]
We now need to show that $a-4 \in S, a-3 \in S$. By the inductive hypothesis, we have $\G_{\E}(2,a-1) = a-4$ and so $a-4 \in S$. Also by the inductive hypothesis, we have $\G_{\E}(2,a-5) = a-3$ and so $a-3 \in S$. For the second condition, assume by contradiction that $a+1 \in S$. By the definition of $S$, there exists $i \leq a-1$ such that $\G_{\E}(2,i) = a+1$. This condition holds if and only if $i = a-1 \equiv  1$ as $\G_{\E}(2,1) = 0$ and $\G_{\E}(2,i) \leq i+2 \leq a+1$ for $i > 1$ by the inductive hypothesis. However, by the inductive hypothesis we have  $\G_{\E}(2,i) = i-3 = a-4$, giving a contradiction. Therefore, $a+1 \notin S$
\end{proof}

%====================================================================================
\smallskip
The proof of the following result is essentially the same as that of Theorem \ref{BW-upper1}, with Lemma \ref{CW-R2} replacing Lemma \ref{BW-R12}. We leave the details to the reader.

\begin{theorem} \label{CW-low}
Let $3 \leq a \leq b$, we have $\G_{\E}(a,b) \geq b - 2a + 1$.
\end{theorem}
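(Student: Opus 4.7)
The plan is to mirror the inductive proof of Theorem~\ref{BW-lower}, replacing Lemma~\ref{BW-R12} (row values for $a=1,2$ in $\R$-Wythoff) by Remark~\ref{CW-Remark1} together with Lemma~\ref{CW-R2}. I would argue by induction on $a+b$. When $a+b$ is small enough that $b-2a+1\leq 0$, the bound is trivial. For the inductive step, assume the bound for all smaller sums and suppose for contradiction that $c=\G_{\E}(a,b)<b-2a+1$, so that $b\geq 2a\geq 6$ and $c\leq b-2a$.

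First I would examine the horizontal neighbour $(a,b-1)$. The inductive hypothesis yields $\G_{\E}(a,b-1)\geq b-2a\geq c$, and because $(a,b)\to(a,b-1)$ is a legal move, the mex rule forces $\G_{\E}(a,b-1)\neq c$, hence $\G_{\E}(a,b-1)>c$. Writing $\G_{\E}(a,b-1)$ as a mex, some follower of $(a,b-1)$ must therefore have Grundy value $c$. The followers of $(a,b-1)$ in $\E$-Wythoff split into three types: $(a,b-1-j)$ with $j\geq 1$; $(a-i,b-1-j)$ with $1\leq i\leq a$ and $0\leq j<i$ (the reductions of the smaller pile together with the extension moves); and the equal-removal positions $(a-i,b-1-i)$ with $1\leq i\leq a$.

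The key observation, just as for $\R$-Wythoff, is that every follower of $(a,b-1)$ of the first two types is already a follower of $(a,b)$: a follower of the form $(a-i,b-1-j)$ with $j<i$ is reached from $(a,b)$ by removing $i$ from the smaller pile and $j+1\leq i$ from the larger, which is legal (equal removal if $j+1=i$, extension if $j+1<i$). Since $\G_{\E}(a,b)=c$, none of these followers can have Grundy value $c$. The whole burden of the proof is therefore to show that $\G_{\E}(a-i,b-1-i)>c$ for every $1\leq i\leq a$.

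For $a-i\geq 3$ the inductive hypothesis gives $\G_{\E}(a-i,b-1-i)\geq (b-1-i)-2(a-i)+1=b-2a+i>c$. The three residual cases $a-i\in\{0,1,2\}$ are dispatched by the explicit row formulas: $\G_{\E}(0,b-1-a)=b-1-a>b-2a$ since $a\geq 2$; by Remark~\ref{CW-Remark1} combined with Lemma~\ref{CW-R1}, each possible value of $\G_{\E}(1,b-a)$, namely $b-a+1$ or $b-a-2$, exceeds $b-2a$ since $a\geq 3$; and by Lemma~\ref{CW-R2}, each possible value of $\G_{\E}(2,b-a+1)$, namely $b-a+3$, $b-a-2$, or $b-a+2$, likewise exceeds $b-2a$ since $a\geq 3$. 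With these inequalities established, no follower of $(a,b-1)$ can carry the value $c$, contradicting $\G_{\E}(a,b-1)>c$, and the induction closes.

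The main obstacle is the bookkeeping of followers of $(a,b-1)$: the extension moves create many more predecessors than in $\R$-Wythoff, so one has to verify carefully that every follower other than the equal-removal positions $(a-i,b-1-i)$ is already a follower of $(a,b)$. Once that observation is in place, the rest is routine arithmetic in the three small-$a$ cases via the known row formulas.
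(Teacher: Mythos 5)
Your proof is correct and is precisely the adaptation the paper has in mind: the paper's own ``proof'' of Theorem~\ref{CW-low} is a one-line deferral to the argument of Theorem~\ref{BW-lower} (its citation of Theorem~\ref{BW-upper1} there is evidently a typo), and you supply the one genuinely new detail that the deferral glosses over, namely that every follower of $(a,b-1)$ other than the equal-removal positions $(a-i,b-1-i)$ is already a follower of $(a,b)$ in $\E$-Wythoff, so the contradiction via the mex of $(a,b-1)$ goes through; your small-$a$ arithmetic using $\G_{\E}(0,\cdot)$, Remark~\ref{CW-Remark1} and Lemma~\ref{CW-R2} also checks out.
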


%===================================================================================
\smallskip
\begin{theorem} \label{CW-high}
In $\E$-Wythoff, for $a \leq b$, we have $\G_{\E}(a,b) \leq a+b$.
\end{theorem}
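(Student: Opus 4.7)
The plan is to prove this by induction on $a+b$, in the same spirit as the proof of Theorem \ref{BW-upper1} but with a substantially simpler execution because the bound $a+b$ is weaker than the $a+b-1$ bound established for $\R$-Wythoff. The main observation to extract first is that every legal move of $\E$-Wythoff strictly decreases the sum of the two pile sizes. Indeed, removing $i$ tokens from a single pile decreases the sum by $i \geq 1$; removing $i$ tokens from both piles decreases it by $2i \geq 2$; and the adjoined move, which removes $k$ tokens from the smaller pile and $l < k$ tokens from the other, decreases the sum by $k+l$, which is at least $k \geq 1$ since the adjoined move must remove a positive number of tokens from the smaller pile. Hence every follower $(c,d)$ of $(a,b)$ satisfies $c+d \leq a+b-1$.

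The base case $a+b=0$ is immediate, as $\G_{\E}(0,0) = 0 \leq 0$. For the inductive step, assume that $\G_{\E}(c,d) \leq c+d$ holds for every position with $c+d < a+b$; by the symmetry $\G_{\E}(c,d) = \G_{\E}(d,c)$ this hypothesis may be applied with no assumption on which of $c,d$ is larger. Combining the observation with the inductive hypothesis, every follower $(c,d)$ of $(a,b)$ satisfies
\[
\G_{\E}(c,d) \leq c+d \leq a+b-1.
\]
Therefore the set of $\G_{\E}$-values of followers of $(a,b)$ is a subset of $\{0,1,\ldots,a+b-1\}$, and so its $mex$ is at most $a+b$, giving $\G_{\E}(a,b) \leq a+b$.

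There is no genuine obstacle here; the only point requiring attention is the verification that the newly adjoined move never preserves $a+b$, which follows from the requirement $k \geq 1$ even in the edge case $l=0$. In particular, no case analysis on small values of $a$ is needed, in contrast to the proof of Theorem \ref{BW-upper1}.
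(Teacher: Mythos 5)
Your proof is correct and follows essentially the same route as the paper: induction on $a+b$, using the fact that every move of $\E$-Wythoff strictly decreases the coordinate sum, so all followers have Sprague-Grundy value at most $a+b-1$ by the inductive hypothesis and the $mex$ is at most $a+b$. The only (immaterial) difference is that you start the induction at $(0,0)$, whereas the paper verifies $a+b\leq 5$ directly before invoking the same one-line inductive step.
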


\begin{proof}
The proof is by induction on $a+b$. Note that the theorem holds for $a + b \leq 5$ by Remark \ref{CW-Remark1} and Lemma \ref{CW-R2}. In general, one has
\begin{align*}
\G_{\E}&(a,b) = \\
       &mex\{ \G_{\E}(a,b-i), \G_{\E}(a-k,b-l) | 1\leq i \leq b, 0 \leq l \leq k \leq a, k \geq 1\}.
\end{align*}
By the inductive hypothesis, all elements in the $mex$ set are less than $a+b$ and so $\G_{\E}(a,b) \leq a+b$. \end{proof}

%==================================================================================
\smallskip
\begin{remark} \label{CW-Remark2}
Let $K, L$ be sets of positive integers. Consider the extension of Wythoff's game obtained by adjoining a move which removes $k \in K$ tokens from the smaller pile (or any pile if the two piles have the same size) and $l \in L$ tokens from the other pile such that $k > l$ and $k,l$ satisfy some given relation $R(k,l)$. An example for $R(k,l)$ is $k = l+1$. Then, one can check that Theorems \ref{CW-P}, \ref{CW-Row}, \ref{CW-low}, and \ref{CW-high} still hold for this extension without changing the proofs.
\end{remark}

We conclude by stating conjectures and a question concerning $\E$-Wythoff. The first two following conjectures describe the distribution of Sprague-Grundy values on diagonals parallel to the main diagonal and the next conjecture describes the distribution of Sprague-Grundy values on each row of the expansion of Table \ref{T2}.

%==================================================================================
\smallskip
\begin{conjecture} \label{EW-C1}
Let $r \geq 0, a \geq 2r$. Then
\begin{align*}
\G_{\E}(a,a+r) =
\begin{cases}
3,    &\text{if $a = 2, r = 0$}; \\
2a+r, &\text{otherwise}.
\end{cases}
\end{align*}
\end{conjecture}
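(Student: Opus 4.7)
The plan is to prove Conjecture \ref{EW-C1} by strong induction on the position total $a+b=2a+r$. The base cases for small $a+b$ follow by direct inspection of Table \ref{T2}, or from Remark \ref{CW-Remark1} and Lemma \ref{CW-R2}; the singular exception $\G_\E(2,2)=3$ is already noted in the statement itself. For the inductive step, Theorem \ref{CW-high} immediately supplies the upper bound $\G_\E(a,a+r)\leq 2a+r$, so the task reduces to exhibiting, for every $v\in\{0,1,\ldots,2a+r-1\}$, a follower of Grundy value exactly $v$. The four kinds of follower available are (A) removal from the smaller pile, (B) removal from the larger pile, (C) equal removal, and (D) the new $\E$-Wythoff moves.

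I would begin with the family $(0,m)$: by the $\E$-Wythoff analog of Lemma \ref{BW-R0}, $\G_\E(0,m)=m$, and from $(a,a+r)$ the positions $(0,m)$ for $m\in\{r,r+1,\ldots,a+r\}$ are all reachable — via the equal removal $k=l=a$ for $m=r$, via the type-(A) removal $k=a$ for $m=a+r$, and via the new moves $k=a$, $l=a+r-m$ with $l\in\{1,\ldots,a-1\}$ for $m\in\{r+1,\ldots,a+r-1\}$. This contributes the $a+1$ consecutive Grundy values $\{r,r+1,\ldots,a+r\}$. Next, for $v=2a+(r-j)$ with $j\in\{1,\ldots,r\}$, the type-(B) follower $(a,a+r-j)$ lies inside the inductive cone since $a\geq 2r>2(r-j)$, so the inductive hypothesis gives $\G_\E=v$; the lone borderline $a=2$, $r=j=1$ is dispatched by substituting the type-(A) follower $(1,3)$, of Grundy value $4$ by Remark \ref{CW-Remark1} and Lemma \ref{CW-R1}.

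The remaining targets $\{a+r+1,\ldots,2a-1\}\cup\{0,1,\ldots,r-1\}$ are to be supplied by further type-(B), type-(C) and type-(D) followers. For $v\in\{a+r+1,\ldots,2a-1\}$, setting $s=2a+r-v$, the aim is to choose $(k,l)$ with $k+l=s$, $1\leq l<k\leq a$, and $a-k\geq 2(r+k-l)$; the inductive hypothesis applied to the new-move follower $(a-k,a+r-l)$ then yields $\G_\E=2(a-k)+(r+k-l)=v$. For $v\in\{0,1,\ldots,r-1\}$, Theorem \ref{CW-Row} locates the unique column $b$ with $\G_\E(a,b)=v$: when $b\leq a+r-1$ this is a type-(B) follower, and in any case $v=0$ is always supplied by a move to a $\P$-position guaranteed by Theorem \ref{CW-P}.

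The principal obstacle is this final step. For $v\in\{a+r+1,\ldots,2a-1\}$ and $a$ only slightly above $2r$ (roughly $2r\leq a<4r$), the joint constraint $1\leq l<k\leq a$ together with the inductive-cone condition $a\geq 3k-2l+2r$ need not admit a solution for every $s$; some such targets must instead be covered by type-(B) followers whose row-$a$ Grundy values happen to fall in the relevant range, adding further case analysis. For $v\in\{2,3,\ldots,r-1\}$ the paper supplies no closed-form description of positions of Grundy value $v$ beyond the $\P$-position characterization and Theorem \ref{CW-V1}, so one would have either to strengthen the inductive hypothesis (for instance, by simultaneously controlling the column indices of low-Grundy positions in row $a$ to ensure they lie within $[0,a+r-1]$) or to construct explicit diagonal and new-move followers of each small Grundy value by hand; this auxiliary structural claim, rather than the main inductive skeleton, is where the real work of a complete proof will lie.
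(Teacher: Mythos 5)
The statement you are trying to prove is stated in the paper only as Conjecture~\ref{EW-C1}; the paper offers no proof of it, so there is nothing to compare your argument against, and the question is simply whether your proposal closes the conjecture. It does not, and you have correctly diagnosed where it fails. The sound parts are: the upper bound $\G_{\E}(a,a+r)\leq 2a+r$ from Theorem~\ref{CW-high}; the $a+1$ consecutive values $\{r,\ldots,a+r\}$ realized by the followers $(0,m)$; the values $\{2a,\ldots,2a+r-1\}$ realized by the followers $(a,a+r-j)$, which stay inside the inductive cone $b\leq 3a'/2$ (with the single exceptional follower $(2,2)$ correctly patched by $(1,3)$); and the value $0$ via Theorem~\ref{CW-P} and the value $1$ via part (iii) of the proof of Theorem~\ref{CW-V1}. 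That much is a correct skeleton.

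The two remaining ranges are genuine gaps, and the second one is structural rather than a matter of bookkeeping. For $v\in\{a+r+1,\ldots,2a-1\}$ you need $(k,l)$ with $k+l=s$, $1\leq l<k\leq a$ and $a-k\geq 2(r+k-l)$; writing $d=k-l$ this forces $3k\leq a-2r+2s-\text{(something positive)}$ and for $a$ close to $2r$ and $s$ close to $r+1$ no admissible pair exists, so these targets must be hit by row-$a$ followers whose Grundy values you cannot currently locate. Worse, for $v\in\{2,\ldots,r-1\}$ the inductive hypothesis is of no use at all: it only describes positions $(a',b')$ with $b'-a'\leq a'/2$, and all such positions have Grundy value at least $2a'\geq 4(b'-a')$, i.e., the hypothesis only ever produces \emph{large} values. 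The small values you need as followers necessarily live in columns far from the diagonal, a region about which neither your induction nor anything in the paper (beyond the $\P$-positions and Theorem~\ref{CW-V1}) says anything. Any successful proof must therefore be coupled with an auxiliary structural result --- for instance a proof of additive periodicity of the rows (Conjecture~\ref{CW-additive}) together with control of where the values $0,1,\ldots,r-1$ occur in row $a$, or an explicit generalization of Theorem~\ref{CW-V1} to Grundy values $2,\ldots,r-1$. As it stands your proposal is a plan with an honestly identified missing ingredient, not a proof, and the statement should remain a conjecture.
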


%===================================================================================
\smallskip
\begin{conjecture}
Let $a \geq 4, 2 \leq r \leq a+1$. Then $\G_{\E}(a,3a+r) = 4a+r-1$.
\end{conjecture}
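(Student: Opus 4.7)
The plan is to prove the conjecture by strong induction on $a$, with a secondary induction on $r$. Fix $a \geq 4$ and assume the conjecture for all smaller values of $a$ and, at the current $a$, for values smaller than the current $r$. By definition $\G_{\E}(a, 3a+r)$ is the $mex$ of follower Grundy values, so I must verify: (i) no follower has Grundy value $4a+r-1$; and (ii) every integer in $\{0, 1, \ldots, 4a+r-2\}$ occurs as a follower Grundy value.

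For step (i), Theorem \ref{CW-high} bounds any follower $(a', b')$ by $\G_{\E}(a', b') \leq a' + b'$. Since every move decreases the sum by at least one, $a' + b' \leq a + b - 1 = 4a + r - 1$, with equality only for the two followers $(a, b - 1)$ and $(a - 1, b)$. When $r \geq 3$, the inner hypothesis gives $\G_{\E}(a, b - 1) = 4a + (r-1) - 1 = 4a + r - 2 < 4a + r - 1$. When $r \leq a - 3$, I rewrite $(a - 1, b) = (a - 1,\, 3(a - 1) + (r + 3))$ and apply the outer hypothesis to get $\G_{\E}(a - 1, b) = 4(a - 1) + (r + 3) - 1 = 4a + r - 2$. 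The remaining edge cases, namely $r = 2$ (so that $(a, b-1) = (a, 3a+1)$ just falls below the conjectured range) and $r \in \{a-2, a-1, a, a+1\}$ (so that $(a-1, b)$ just exceeds it), fall in ``transition strips'' not covered by either conjecture and will require separate direct bounds.

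For step (ii), I would exhaust $\{0, 1, \ldots, 4a + r - 2\}$ from three sources. First, the (C)-moves with $k = a$ reach $(0, b - l)$ for $0 \leq l \leq a - 1$, and since $\G_{\E}(0, n) = n$ this contributes the consecutive block $\{2a + r, 2a + r + 1, \ldots, 3a + r\}$. Second, the (B)-moves $(a - i,\, 3a + r - i) = (a - i,\, 3(a - i) + (r + 2i))$, valid for $1 \leq i \leq (a + 1 - r)/3$, fall in the conjectured range at $a - i$ and by the outer hypothesis contribute the values $4a + r - 1 - 2i$ — that is, the odd-offset values $4a + r - 3, 4a + r - 5, \ldots$; the intermediate even-offset values in the high range should be supplied by (C)-moves to $(1, b')$ and $(2, b')$, whose Grundy values are given explicitly by Lemmas \ref{CW-R1} and \ref{CW-R2}. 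Third, the remaining small values $\{0, 1, \ldots, 2a + r - 1\}$ should be covered by (A)-moves $(a, b')$ with $b' < 3a + 2$, using Theorem \ref{CW-Row} (each value occurs exactly once in row $a$) to locate the appropriate $b'$.

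The principal obstacle is precisely this last item in step (ii): it demands a usable description of $\G_{\E}(a, b')$ for $b' < 3a + 2$, which is the very ``gap'' between the range $b' \leq 3a/2$ of Conjecture \ref{EW-C1} and the range $b' \geq 3a + 2$ of the present conjecture. Values in this middle strip do not appear to follow a simple formula, and without them one cannot verify that all small Grundy values are attained; the same gap blocks the boundary cases of step (i). I expect a successful attack will therefore require replacing the conjecture by a stronger, unified statement describing $\G_{\E}(a, b)$ throughout $b \in [a, 4a + 1]$ (simultaneously subsuming Conjecture \ref{EW-C1}) and proving that stronger statement by a single simultaneous induction; the conjecture as stated is probably too local to close by itself.
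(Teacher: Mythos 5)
This statement is left as an \emph{open conjecture} in the paper; there is no proof of it there to compare yours against, and your proposal, as you acknowledge yourself, does not close the argument either. Your partial analysis is largely sound: the reduction to checking (i) that no follower attains $4a+r-1$ and (ii) that every value in $\{0,\dots,4a+r-2\}$ is attained is the right frame; using Theorem \ref{CW-high} to dispose of every follower whose coordinate sum drops by at least $2$, and the inner/outer induction to handle $(a,b-1)$ and $(a-1,b)$ away from the boundary cases, is correct; and your identification of the middle strip $a<b'<3a+2$ of row $a$ as the real obstruction is accurate --- the block $\{0,\dots,2a+r-1\}$ must come almost entirely from moves $(a,b')$ into that strip, which neither Conjecture \ref{EW-C1} nor the present conjecture describes, while Theorem \ref{CW-Row} only guarantees each value occurs \emph{somewhere} in row $a$, not at a position with $b'<3a+r$.

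One concrete flaw beyond the gaps you flag yourself: the even-offset values in the high range cannot be supplied by moves to rows $1$ and $2$. Every position $(1,b')$ or $(2,b')$ reachable from $(a,3a+r)$ has $b'\leq 3a+r$, so by Lemmas \ref{CW-R1} and \ref{CW-R2} its Grundy value is at most $3a+r+2$, whereas the values you need reach up to $4a+r-2=(3a+r+2)+(a-4)$, which exceeds that bound as soon as $a\geq 5$. Those values would instead have to come from extension moves $(a-k,\,3a+r-l)$ with $l<k$ that land in the conjectured range of row $a-k$ (these contribute $4a+r-1-k-l$, covering both parities), together with --- once again --- the unknown middle strips of the intermediate rows. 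So your closing diagnosis is the right one: any proof must proceed by a simultaneous induction on a single stronger statement describing $\G_{\E}(a,b)$ throughout $a\leq b\leq 4a+1$ (subsuming Conjecture \ref{EW-C1}), and the conjecture as isolated here cannot be proved by induction on itself.
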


%===================================================================================
\smallskip
\begin{conjecture} \label{CW-additive}
Let $a$ be nonnegative integer. The sequence $\{\G_{\E}(a,i)\}_{i\geq 0}$ is additively periodic.
\end{conjecture}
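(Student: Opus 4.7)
The plan is to prove Conjecture \ref{CW-additive} by strong induction on $a$. The base case $a = 0$ is immediate since $\G_{\E}(0,i) = i$, and the cases $a = 1, 2$ follow from the explicit formulas in Remark \ref{CW-Remark1} and Lemma \ref{CW-R2}, each yielding additive period $3$.

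For the inductive step with $a \geq 3$, suppose that rows $0, 1, \ldots, a-1$ are additively periodic with a common period $p$ beyond some threshold $N_0$. A direct consequence is that, for every pair $(k,l)$ with $1 \leq k \leq a$ and $0 \leq l \leq k$, the shifted value $\G_{\E}(a-k, b-l) - b$ is a $p$-periodic function of $b$ for $b$ sufficiently large. By Theorem \ref{CW-Row}, the map $b \mapsto \G_{\E}(a,b)$ is a bijection $\N \to \N$; denote its inverse by $\tau$. Theorems \ref{CW-low} and \ref{CW-high} imply $\tau(v) \in [v - a,\ v + 2a - 1]$ for all sufficiently large $v$. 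Hence every $v \leq b - 2a + 1$ satisfies $\tau(v) \leq b$ and therefore lies in $V(b) := \{\G_{\E}(a, b') : b' \leq b\}$; this forces the missing values of $V(b)$ within $[0, b + a + 1]$ to form an $(a+1)$-element set $\sigma(b)$ contained in the window $[b - 2a + 2,\ b + a + 1]$ of size $3a$.

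I would then encode the state at stage $b$ as the pair $\bigl(\sigma(b) - b,\ b \bmod p\bigr)$, which takes values in a finite set. The mex defining $\G_{\E}(a, b+1)$ can be rewritten as $b + \min\bigl\{r \in \sigma(b) - b : b + r \notin T(b+1)\bigr\}$, where $T(b+1) = \{\G_{\E}(a-k, b+1-l) : 1 \leq k \leq a,\ 0 \leq l \leq k\}$; both $\sigma(b) - b$ and $T(b+1) - b$ are fully determined by the current state thanks to the inductive hypothesis. The update $\sigma(b) \mapsto \sigma(b+1)$ is obtained by shifting the window by one, appending the newly-missing top value $b + a + 2$, and deleting $\G_{\E}(a, b+1)$. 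Thus the evolution of $\bigl(\sigma(b) - b,\ b \bmod p\bigr)$ is a deterministic dynamical system on a finite set, so by the pigeonhole principle it is eventually periodic with some period $P$ which is automatically a multiple of $p$. This translates exactly to $\G_{\E}(a, b + P) = \G_{\E}(a, b) + P$ for $b$ large, closing the induction.

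The principal obstacle is justifying the bounded-window claim for $\sigma(b)$, which requires combining the bijectivity of Theorem \ref{CW-Row} with the two-sided bounds of Theorems \ref{CW-low} and \ref{CW-high} to control $\tau$ precisely. A secondary technical point is that Theorem \ref{CW-low} is stated only for $3 \leq a \leq b$, so the argument must absorb an initial segment of bounded length before the finite-state dynamics can be started; this is routine bookkeeping once the transition rule is set up, and an analogous strategy (using Theorems \ref{BW-Row}, \ref{BW-lower}, and \ref{BW-upper1}) should simultaneously yield Conjecture \ref{BW-additive}, modulo the fact that the row map for $\R$-Wythoff is only \emph{surjective} rather than injective, so one would instead bound the multiplicity with which each Grundy value is taken in a given row.
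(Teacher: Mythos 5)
This statement is left as an open conjecture in the paper --- no proof is given (the author merely records the observed behaviour, in parallel with Conjecture \ref{BW-additive}) --- so there is no argument of the paper to compare against; you are proposing to resolve the conjecture. Your strategy is the standard additive-periodicity machine of Landman and of Dress--Flammenkamp--Pink, and I believe it works here, because every ingredient it needs is already established in the paper: injectivity of the row map (the uniqueness part of Theorem \ref{CW-Row}), the two-sided linear bounds (Theorems \ref{CW-low} and \ref{CW-high}), and explicit periodic formulas for the base rows $a\le 2$ (Remark \ref{CW-Remark1}, Lemma \ref{CW-R2}). Your window computation checks out: $V(b)$ has exactly $b+1$ distinct elements, all in $[0,a+b]$, so $\sigma(b)$ has exactly $a+1$ elements, and the bound $\tau(v)\le v+2a-1$ confines them to $[b-2a+2,\,b+a+1]$; moreover $a+b+1\in\sigma(b)\setminus T(b+1)$ guarantees the rewritten mex is well defined, and the cross-row followers $(a-k,b+1-l)$ have second coordinate within $a$ of $b$, so $T(b+1)-b$ really is a function of $b\bmod p$ once $b$ is large. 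The only points needing explicit care in a write-up are (i) the finitely many positions $(a,i)$ with $i<a$, which fall outside the hypotheses of Theorem \ref{CW-low} but are all same-row followers of $(a,b)$ for $b>a$ and hence harmlessly absorbed into $V(b)$, and (ii) verifying that the recurrent part of the finite orbit is reached, which is automatic for a deterministic map on a finite set. Your closing remark about $\R$-Wythoff is the one place I would push back: there the issue is not merely that the row map fails to be injective, but that $(a,i)$ with $i<a$ is \emph{not} a follower of $(a,b)$ when $a<b$ (one may not remove from the smaller pile), so the same-row follower set is only the tail $\{(a,i):a\le i<b\}$; the correct fix is to run the automaton on that tail, where values are again pairwise distinct, rather than to bound multiplicities. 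That caveat aside, your proposal is a sound and essentially complete plan for proving Conjecture \ref{CW-additive}.
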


%===================================================================================
\smallskip
\begin{question} \label{Q1}
Does there exist another variant of Wythoff's game preserving its $\P$-position and accepting the formula $(\lfloor \phi n \rfloor - 1, \lfloor \phi n \rfloor +n-1)$ for all but possibly a finite number of positions which have Sprague-Grundy value 1?
\end{question}

%===================================================================================
%===================================================================================
%==================================================================================
%==================================================================================
\smallskip
\begin{ack}
I thank my supervisor Grant Cairns for several valuable suggestions regarding content and exposition. I thank the referees for their helpful comments which improved the presentation of the paper.
\end{ack}

%==================================================================================
%==================================================================================
%==================================================================================
\small

\end{document}